\newtheorem{Theorem}{Theorem}[section]
\newtheorem{Lemma}{Lemma}[section]
\newtheorem{Remark}{Remark}[section]
\newtheorem{Corollary}{Corollary}[section]
\newtheorem{example}{Example}[section]
\numberwithin{equation}{section}
\begin{document}
	
\title{Weighted Inequalities For The Numerical Radius}
\author{Shiva Sheybani$^{1}$, Mohammed Sababheh$^{2}$, Hamid Reza Moradi$^{3}$}
\subjclass[2010]{Primary 47A12, Secondary 47A30, 47A63}
\keywords{Bounded linear operators, numerical radius, operator norm, inequality}

\begin{abstract}
In this article, we obtain several new weighted bounds for the numerical radius of a Hilbert space operator.
	The significance of the obtained results is the way they generalize many existing results in the literature; where certain values of the weights imply some known results, or refinements of these results. In the end, we present some numerical examples that show how our results refine the well known results in the literature, related to this topic.
\end{abstract}
\maketitle
%------------------------------------------------------------------------------------%
\pagestyle{myheadings}
\markboth{\centerline {}}
{\centerline {}}
\bigskip
\bigskip
%------------------------------------------------------------------------------------%
%------------------------------------------------------------------------------------%
\section{Introduction} \label{sec:intro}

Let $\mathscr{B}\left( \mathscr{H}\right)$ denote the $C^*$-algebra of all bounded linear operators on a complex Hilbert space $\mathscr{H}.$
In the sequel, upper case letters will be used to denote elements of $\mathscr{B}\left( \mathscr{H}\right)$, while lower case letters will denote real numbers.
For $A\in \mathscr{B}\left( \mathscr{H}\right)$, the numerical radius and operator norm are defined, respectively, by
$$\omega(A)=\sup_{\|x\|=1}\left|\left<Ax,x\right>\right|\;{\text{and}}\|A\|=\sup_{\|x\|=1}\|Ax\|;$$ where $\|\cdot\|$ is the norm induced by the inner product $\left<\cdot,\cdot\right>$ on $\mathscr{H}.$
It is well known that both quantities $\omega(\cdot)$ and $\|\cdot\|$ define equivalent norms on $\mathscr{B}\left( \mathscr{H}\right)$ via the inequalities 
\begin{align}\label{ineq_equiv_intro}
\frac{1}{2}\|A\|\leq\omega(A)\leq \|A\|, A\in \mathscr{B}\left( \mathscr{H}\right).
\end{align}
The significance of such bounds lies in finding easier lower and upper bounds for the numerical radius; due to the difficulty in computing the exact value of $\omega(A).$
Thus, sharpening the bounds in \eqref{ineq_equiv_intro} has received a considerable attention in the literature. 
 
Among the most well established interesting results in this direction are the following inequalities due to Kittaneh \cite{7,07}
\begin{equation}\label{ineq_kitt_1}
\omega(A)\leq \frac{1}{2}\|\;|A|+|A^*|\;\|,
\end{equation}
\begin{equation}\label{ineq_kitt_2}
\omega(A)^2\leq \frac{1}{2}\|\;|A|^2+|A^*|^2\|,
\end{equation}
and
\begin{equation}\label{ineq_kitt_3}
\omega(A)\leq \frac{1}{2}\left(\|A\|+\|A^2\|^{\frac{1}{2}}\right),
\end{equation}
where $A^*$ is the adjoint operator of $A$ and $|A|=(A^*A)^{1/2}.$
We emphasize that the notation $\omega(A)^2$ means $(\omega(A))^2.$
We refer the reader to \cite{10,moradi,8} for some treatments of the inequalities \eqref{ineq_kitt_1}, \eqref{ineq_kitt_2} and \eqref{ineq_kitt_3}. In \cite{sababheh}, a refinement of \eqref{ineq_kitt_2} was shown as follows
\begin{equation}\label{ineq_int}
{{\omega }}\left( A \right)^{2}\le \left\| \int\limits_{0}^{1}{{{\left( \left( 1-t \right)\left| A \right|+t\left| {{A}^{*}} \right| \right)}^{2}}dt} \right\|.
\end{equation}

The Aluthge transform $\widetilde{A}$ of $A\in \mathscr{B}\left( \mathscr{H} \right)$ has appeared in many studies treating numerical radius inequalities, and has provided some interesting refinements.
Recall that the Aluthge transform $\widetilde{A}$ of $A\in \mathscr{B}\left( \mathscr{H} \right)$ is defined by $\widetilde{A}=|A|^{\frac{1}{2}}U|A|^{\frac{1}{2}},$ where $U$ is  the partial isometry appearing in the polar decomposition $A=U|A|$ of $A$, \cite{alu}.
Yamazaki showed the following better estimates of \eqref{ineq_kitt_3} in \cite{yamazaki}
\begin{equation}\label{ineq_yama_1}
\omega \left( A \right)\le \frac{1}{2}\left( \left\| A \right\|+\omega \left( \widetilde{A} \right) \right).
\end{equation}

In this article, we further explore related numerical radius inequalities, by providing weighted versions of a parameter $t$ which, upon selecting certain values, implies the above inequalities or some sharper bounds.

The following results will be needed in our analysis.
\begin{Lemma}\label{11}
	\cite{kato} Let $A\in \mathscr{B}\left( \mathscr{H} \right)$ and let $x,y\in\mathscr{H}$ be any vectors. If $0\le t\le 1$,
	\[{{\left| \left\langle Ax,y \right\rangle  \right|}^{2}}\le \left\langle {{\left| A \right|}^{2\left( 1-t \right)}}x,x \right\rangle \left\langle {{\left| {{A}^{*}} \right|}^{2t}}y,y \right\rangle.\]
\end{Lemma}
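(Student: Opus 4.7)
The plan is to reduce the inequality to the ordinary Cauchy--Schwarz inequality by using the polar decomposition of $A$ to split $|A|$ into two commuting factors that carry exactly the exponents appearing on the right-hand side. Write $A = U|A|$, where $U$ is the partial isometry from the polar decomposition, with initial space $\overline{\operatorname{range}(|A|)}$ and final space $\overline{\operatorname{range}(A)}$. Since $|A|$ is positive, the continuous functional calculus supplies commuting positive factors $|A|^{t}$ and $|A|^{1-t}$ satisfying $|A|^{t}\,|A|^{1-t} = |A|$ for every $t \in [0,1]$, so that the right-hand side of the desired bound is in fact set up by this splitting.

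The first step is to rewrite
\[
\langle Ax, y\rangle = \langle U|A|^{t}|A|^{1-t} x,\, y\rangle = \langle |A|^{1-t} x,\; |A|^{t} U^{*} y\rangle,
\]
moving $U$ to $U^{*}$ in the second slot and then transferring the self-adjoint commuting factor $|A|^{t}$ across the inner product. The usual Cauchy--Schwarz inequality then yields
\[
|\langle Ax, y\rangle|^{2} \le \langle |A|^{2(1-t)} x, x\rangle \cdot \langle |A|^{2t} U^{*} y, U^{*} y\rangle = \langle |A|^{2(1-t)} x, x\rangle \cdot \langle U|A|^{2t}U^{*} y, y\rangle.
\]

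What remains is to identify $U|A|^{2t}U^{*}$ with $|A^{*}|^{2t}$. The base case is $|A^{*}|^{2} = AA^{*} = U|A|^{2}U^{*}$. Because $U^{*}U$ is the orthogonal projection onto $\overline{\operatorname{range}(|A|)}$, it acts as the identity on $\operatorname{range}(|A|^{2})$, and induction gives $(U|A|^{2}U^{*})^{k} = U|A|^{2k}U^{*}$ for every integer $k \ge 1$. A standard continuous-functional-calculus approximation, applied to $s \mapsto s^{t}$ on the spectrum of $|A|^{2}$, upgrades this to $(U|A|^{2}U^{*})^{t} = U|A|^{2t}U^{*}$, which is the claimed identity and completes the proof.

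I expect the main obstacle to be this final step, since the partial-isometry nature of $U$ requires some care: one must make sure $U^{*}U$ really acts trivially wherever $|A|^{2}$ lands, which is exactly what is needed to telescope the powers. The argument goes through cleanly provided one uses functions vanishing at the origin, and $s\mapsto s^{t}$ with $t > 0$ qualifies. As a sanity check, the boundary values $t=0$ and $t=1$ collapse the conclusion to the classical Cauchy--Schwarz estimates $|\langle Ax, y\rangle|^{2} \le \|Ax\|^{2}\,\|y\|^{2}$ and $|\langle Ax, y\rangle|^{2} \le \|x\|^{2}\,\|A^{*}y\|^{2}$, respectively.
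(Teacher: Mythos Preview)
Your argument is correct. The splitting via the polar decomposition and Cauchy--Schwarz is exactly the standard route, and your handling of the identity $U|A|^{2t}U^{*}=|A^{*}|^{2t}$ via functional calculus with functions vanishing at $0$ is the right way to deal with the partial-isometry issue.

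Note, however, that the paper does not supply its own proof of this lemma: it is stated as a known result with a citation to Kato, so there is nothing to compare your argument against within the paper itself. What you have written is essentially the classical proof of Kato's mixed Schwarz inequality.
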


\begin{Lemma}\label{12}\cite{mcarthy}
	Let $A\in \mathscr{B}\left( \mathscr{H} \right)$ be a positive operator and let $x\in\mathscr{H}$ be a unit vector. Then
	\[{{\left\langle Ax,x \right\rangle }^{r}}\le \left\langle {{A}^{r}}x,x \right\rangle ,\text{ }\left( r\ge 1 \right),\]
	\[\left\langle {{A}^{r}}x,x \right\rangle \le {{\left\langle Ax,x \right\rangle }^{r}},\text{ }\left( 0\le r\le 1 \right).\]
\end{Lemma}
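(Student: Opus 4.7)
The plan is to deduce both estimates as direct consequences of the spectral theorem for positive operators, combined with Jensen's inequality applied to the power function $f(\lambda) = \lambda^r$.

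First, since $A \in \mathscr{B}(\mathscr{H})$ is positive, the spectrum $\sigma(A)$ lies in $[0, \infty)$ and the spectral theorem provides a projection-valued measure $E$ on $\sigma(A)$ with $A = \int_{\sigma(A)} \lambda\, dE(\lambda)$. Fixing the unit vector $x \in \mathscr{H}$, I would introduce the Borel probability measure $\mu_x$ on $\sigma(A)$ defined by $\mu_x(B) = \langle E(B)x, x\rangle$; the normalization $\mu_x(\sigma(A)) = \|x\|^2 = 1$ is precisely the unit vector hypothesis. The Borel functional calculus then yields
\[
\langle Ax, x\rangle = \int_{\sigma(A)} \lambda \, d\mu_x(\lambda), \qquad \langle A^r x, x\rangle = \int_{\sigma(A)} \lambda^r \, d\mu_x(\lambda),
\]
so both sides of each desired inequality live on the same scalar probability space.

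Next, I would apply Jensen's inequality to $f$ against $\mu_x$. For $r \geq 1$ the function $f$ is convex on $[0, \infty)$, so Jensen gives $\bigl(\int \lambda \, d\mu_x\bigr)^r \leq \int \lambda^r \, d\mu_x$, which is exactly the first claim. For $0 \leq r \leq 1$, $f$ is concave on $[0, \infty)$, and Jensen reverses to yield $\int \lambda^r \, d\mu_x \leq \bigl(\int \lambda \, d\mu_x\bigr)^r$, which is the second.

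There is no genuine obstacle here; the result is a classical theorem, and the entire argument consists of recognizing that the functional calculus transports the operator inequality into a scalar integral against a probability measure, where everything reduces to the convexity or concavity of a single power function. For integer exponents one could alternatively iterate the Cauchy--Schwarz inequality (e.g.\ $\langle Ax,x\rangle \le \|Ax\| = \langle A^2x,x\rangle^{1/2}$ when $r=2$, using self-adjointness), but that route does not reach non-integer $r$, so the spectral approach is essential for the full statement.
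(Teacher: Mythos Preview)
Your argument is correct: the spectral theorem converts $\langle A^r x,x\rangle$ into $\int \lambda^r\,d\mu_x$ for the probability measure $\mu_x$, and Jensen's inequality for the convex (resp.\ concave) function $\lambda\mapsto\lambda^r$ on $[0,\infty)$ gives both parts immediately. Note that the paper does not supply its own proof of this lemma; it is quoted as a known result from McCarthy~\cite{mcarthy}, and your spectral--Jensen proof is precisely the standard one.
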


\begin{Lemma}\label{lemma_aluthge_1} \cite{yamazaki}
	Let $A\in \mathscr{B}\left( \mathscr{H} \right)$. Then
	\[\omega \left( \widetilde{A} \right)\le \left\| \widetilde{A} \right\|\le {{\left\| {{A}^{2}} \right\|}^{\frac{1}{2}}}.\]
\end{Lemma}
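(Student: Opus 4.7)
The first inequality $\omega(\widetilde{A})\le\|\widetilde{A}\|$ is just the right half of \eqref{ineq_equiv_intro} applied to the operator $\widetilde{A}$, so no additional work is required there. The real content lies in the bound $\|\widetilde{A}\|\le\|A^{2}\|^{1/2}$. The plan is to compute $\widetilde{A}\widetilde{A}^{*}$ explicitly, rewrite $\|\widetilde{A}\|^{2}$ as the norm of a product of positive operators, and then invoke the cyclic invariance of the spectral radius.

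The first step will be the polar-decomposition identity $U|A|U^{*}=|A^{*}|$. Because $U$ is only a partial isometry, this takes a moment's care: the initial projection $U^{*}U$ is the projection onto the closure of the range of $|A|$ and therefore fixes $|A|$, so $(U|A|U^{*})^{2}=U|A|^{2}U^{*}=AA^{*}$, and uniqueness of the positive square root identifies $U|A|U^{*}$ with $|A^{*}|$. With this identity, $\widetilde{A}\widetilde{A}^{*}=|A|^{1/2}U|A|U^{*}|A|^{1/2}=|A|^{1/2}|A^{*}||A|^{1/2}$. Using $\|T\|^{2}=\|TT^{*}\|$ together with $\|B^{*}B\|=\|B\|^{2}$ (applied to $B=|A^{*}|^{1/2}|A|^{1/2}$) then collapses this to $\|\widetilde{A}\|=\|\,|A|^{1/2}|A^{*}|^{1/2}\,\|$.

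Next I would record the elementary fact that for positive operators $P,Q\in\mathscr{B}(\mathscr{H})$ one has $\|PQ\|^{2}\le\|P^{2}Q^{2}\|$. Indeed $(PQ)^{*}(PQ)=QP^{2}Q$ is positive, so its norm equals its spectral radius, and by cyclic invariance $r(QP^{2}Q)=r(P^{2}Q^{2})\le\|P^{2}Q^{2}\|$. Applying this with $P=|A|^{1/2}$ and $Q=|A^{*}|^{1/2}$ upgrades the previous identity to $\|\widetilde{A}\|^{2}\le\|\,|A|\,|A^{*}|\,\|$.

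To finish, I plan to identify $\|\,|A|\,|A^{*}|\,\|$ with $\|A^{2}\|$. Squaring gives $\|\,|A|\,|A^{*}|\,\|^{2}=\|\,|A|\,|A^{*}|^{2}|A|\,\|=\|(|A|A)(|A|A)^{*}\|=\|\,|A|A\,\|^{2}$, and then $\|\,|A|A\,\|^{2}=\|A^{*}|A|^{2}A\|=\|(A^{2})^{*}A^{2}\|=\|A^{2}\|^{2}$. The step requiring the most vigilance is the first one, since $U$ is not unitary in general and the interaction between $U^{*}U$ and $|A|$ must be handled carefully; once past that, the rest is just $C^{*}$-algebraic bookkeeping combined with the one-line spectral-radius trick.
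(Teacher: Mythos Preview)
Your argument is correct. The paper does not actually prove Lemma~\ref{lemma_aluthge_1}; it merely quotes the result from \cite{yamazaki}, so there is no in-paper proof to compare against.

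A minor observation: your spectral-radius trick $\|PQ\|^{2}=r(QP^{2}Q)=r(P^{2}Q^{2})\le\|P^{2}Q^{2}\|$ is precisely the case $t=\tfrac12$ of Lemma~\ref{lem_log-conv} (take $A=P^{2}$, $B=Q^{2}$ there), so once you have established $\|\widetilde{A}\|=\bigl\|\,|A|^{1/2}|A^{*}|^{1/2}\bigr\|$ you could simply invoke Lemma~\ref{lem_log-conv} to get $\bigl\|\,|A|^{1/2}|A^{*}|^{1/2}\bigr\|\le\bigl\|\,|A|\,|A^{*}|\,\bigr\|^{1/2}=\|A^{2}\|^{1/2}$ in one line. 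Your self-contained derivation is of course fine and slightly more elementary, since it avoids appealing to the full log-convexity statement.
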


\begin{Lemma}\label{lemma_powers}\cite{halmos}
	Let $A\in \mathscr{B}\left( \mathscr{H} \right)$ and let $k\in\mathbb{N}$. Then $$\omega\left(A^k\right)\leq \omega(A)^k.$$
\end{Lemma}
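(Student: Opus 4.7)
My plan is to prove this classical power inequality via a generating-function argument that lifts a scalar partial-fraction identity to the operator level. After a preliminary normalization (if $\omega(A)=0$ then $A=0$ and the claim is trivial; otherwise rescale by $\omega(A)$), it suffices to show that $\omega(A)\le 1$ implies $\omega(A^k)\le 1$, so I fix such an $A$ and a unit vector $x\in\mathscr{H}$.

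The technical heart of the argument is the positivity of the resolvent's real part on the open unit disk. For any $|z|<1$, putting $y=(I-zA)^{-1}x$ and using $x=(I-zA)y$, a direct computation gives
\[
\langle (I-zA)^{-1}x,x\rangle = \langle y,(I-zA)y\rangle = \|y\|^2-\bar z\langle y,Ay\rangle,
\]
whose real part is bounded below by $\|y\|^2(1-|z|\omega(A))\ge 0$. I would then invoke the scalar partial-fraction identity
\[
\frac{1}{1-w^k}=\frac{1}{k}\sum_{j=0}^{k-1}\frac{1}{1-\zeta^j w},\qquad \zeta=e^{2\pi i/k},
\]
which (since all factors are polynomials in $A$ and hence commute) lifts to the operator identity
\[
(I-z^k A^k)^{-1}=\frac{1}{k}\sum_{j=0}^{k-1}(I-\zeta^j zA)^{-1}.
\]
Applying the above positivity to each summand on the right (each $\omega(\zeta^j zA)=|z|\omega(A)<1$) and writing $w=z^k$, I obtain
\[
\operatorname{Re}\langle (I-wA^k)^{-1}x,x\rangle \ge 0 \quad\text{for every }|w|<1 \text{ and every unit }x.
\]

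To finish, I would extract $\omega(A^k)\le 1$ by contradiction. If there were a unit $v$ with $\langle A^k v,v\rangle=Re^{i\phi}$ for some $R>1$, I would pick $w=\rho e^{-i\phi}$ with $1/R<\rho<1$ and set $x=(I-wA^k)v/\|(I-wA^k)v\|$; the earlier computation then yields
\[
\operatorname{Re}\langle (I-wA^k)^{-1}x,x\rangle=\frac{1-\rho R}{\|(I-wA^k)v\|^2}<0,
\]
contradicting the positivity established above. The step I expect to require the most care is verifying that $(I-wA^k)^{-1}$ actually exists for every $|w|<1$ and that the lifted partial-fraction identity holds throughout that region; both rely on the inequality $r(A^k)=r(A)^k\le \omega(A)^k\le 1$, which uses the standard fact that the spectral radius is bounded by the numerical radius (since boundary points of $\sigma(A)$ lie in the approximate point spectrum) together with the spectral mapping theorem for powers.
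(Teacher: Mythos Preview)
The paper does not prove this lemma at all; it is stated as a preliminary fact and attributed to Halmos's problem book, so there is no ``paper's own proof'' to compare against.

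Your argument is correct and is, in fact, essentially the classical proof (due to Berger, as recorded in Halmos). The key characterization---that $\omega(A)\le 1$ is equivalent to $\operatorname{Re}\langle (I-zA)^{-1}x,x\rangle\ge 0$ for all $|z|<1$ and all unit $x$---is exactly what you establish in your first display, and the passage to $A^k$ via the roots-of-unity partial-fraction identity
\[
(I-z^kA^k)^{-1}=\frac{1}{k}\sum_{j=0}^{k-1}(I-\zeta^j zA)^{-1}
\]
is the standard device. Your contradiction step is also fine: with $w=\rho e^{-i\phi}$ and $u=(I-wA^k)v$, one has $u\ne 0$ because $|1/w|=1/\rho>1\ge r(A)^k=r(A^k)$, and then $\langle (I-wA^k)^{-1}x,x\rangle=\langle v,(I-wA^k)v\rangle/\|u\|^2=(1-\rho R)/\|u\|^2<0$. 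The only cosmetic point worth tightening is that your resolvent bound actually gives the strict inequality $\operatorname{Re}\langle (I-zA)^{-1}x,x\rangle\ge\|y\|^2(1-|z|)>0$ (since $y\ne 0$), though the nonstrict version already suffices for the contradiction.
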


\begin{Lemma}\label{lemma_theta} \cite {yamazaki}
	Let $A\in \mathscr{B}\left( \mathscr{H} \right)$. Then
	$$\omega(A)=\sup_{\theta}\left\|\Re\left(e^{i\theta}A\right)\right\|,$$ where $\Re(T)$ is the real part of the operator $T$, defined by $\Re T=\frac{T+T^*}{2}.$
\end{Lemma}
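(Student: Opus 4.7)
The plan is to prove a two-sided inequality by reducing both sides to suprema of the same real-valued quantity, namely $\bigl|\Re\langle e^{i\theta}Ax,x\rangle\bigr|$ taken over unit vectors $x$ and real phases $\theta$. The key observation I would rely on is that, for any self-adjoint operator $T\in\mathscr{B}(\mathscr{H})$, its operator norm and numerical radius agree: $\|T\|=\sup_{\|x\|=1}|\langle Tx,x\rangle|$. Since $\Re(e^{i\theta}A)$ is self-adjoint by construction, this identifies
\[
\bigl\|\Re(e^{i\theta}A)\bigr\|=\sup_{\|x\|=1}\bigl|\langle \Re(e^{i\theta}A)x,x\rangle\bigr|=\sup_{\|x\|=1}\bigl|\Re\langle e^{i\theta}Ax,x\rangle\bigr|,
\]
and taking $\sup$ over $\theta$ on the right turns the target equality into the claim $\omega(A)=\sup_{\theta,x}|\Re\langle e^{i\theta}Ax,x\rangle|$.

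For the upper bound $\sup_\theta\|\Re(e^{i\theta}A)\|\le\omega(A)$, I would simply estimate
$|\Re\langle e^{i\theta}Ax,x\rangle|\le|\langle e^{i\theta}Ax,x\rangle|=|\langle Ax,x\rangle|$
for every unit $x$ and every $\theta$, and then pass to the supremum.

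For the lower bound, the idea is to rotate the inner product onto the real axis. Given a unit vector $x$, write $\langle Ax,x\rangle=r e^{i\varphi}$ with $r=|\langle Ax,x\rangle|\ge 0$, and choose $\theta=-\varphi$. Then $\langle e^{i\theta}Ax,x\rangle=r$ is real and nonnegative, so $|\Re\langle e^{i\theta}Ax,x\rangle|=|\langle Ax,x\rangle|$. This forces $\sup_{\theta,x}|\Re\langle e^{i\theta}Ax,x\rangle|\ge\omega(A)$, completing the argument.

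The proof is essentially mechanical once one invokes the norm$=$numerical-radius identity for self-adjoint operators; there is no real obstacle, only the small bookkeeping point that one must be careful to use the self-adjointness of $\Re(e^{i\theta}A)$ at precisely the right place and that the rotation trick $\theta=-\arg\langle Ax,x\rangle$ is vacuously valid when $\langle Ax,x\rangle=0$ (in which case the argument is trivial anyway).
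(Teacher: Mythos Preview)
Your argument is correct and is the standard proof of this identity. Note, however, that the paper does not supply its own proof of this lemma: it is stated with a citation to Yamazaki \cite{yamazaki} and used as a known result. So there is no ``paper's proof'' to compare against; your write-up would serve perfectly well as a self-contained justification if one were desired.
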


\begin{Lemma}\cite{ak}\label{lem_amer}
	Let $A,B,C,D\in \mathscr{B}\left( \mathscr{H} \right)$. Then
	$$r(AB+CD)\leq \frac{1}{2}\left( \omega(BA)+\omega(DC)+\sqrt{\left(\omega(BA)-\omega(DC)\right)^2+4\|BC\|\cdot\|DA\| }            \right),$$
	where $r(\cdot)$ is the spectral radius.
\end{Lemma}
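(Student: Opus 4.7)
\emph{Plan.} My strategy is to express $r(AB+CD)$ as the spectral radius of a $2\times 2$ operator matrix with diagonal blocks $BA$ and $DC$, then to pass from spectral to numerical radius, and finally to insert a free scaling parameter via a similarity so that the off-diagonal contribution involves the geometric mean $2\sqrt{\|BC\|\,\|DA\|}$ rather than the arithmetic mean.

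First I would introduce
\[T=\begin{pmatrix}A&C\\0&0\end{pmatrix},\qquad S=\begin{pmatrix}B&0\\D&0\end{pmatrix}\in \mathscr{B}(\mathscr{H}\oplus\mathscr{H}),\]
observe that $TS=\begin{pmatrix}AB+CD&0\\0&0\end{pmatrix}$ and $ST=\begin{pmatrix}BA&BC\\DA&DC\end{pmatrix}$, and apply the identity $r(TS)=r(ST)$ together with the fact that the spectrum of a block-diagonal operator is the union of the spectra of its diagonal blocks. This yields
\[r(AB+CD)=r\!\begin{pmatrix}BA&BC\\DA&DC\end{pmatrix}.\]
For every $t>0$, conjugation by the invertible diagonal operator $\operatorname{diag}(t^{-1/2}I,\,t^{1/2}I)$ is a similarity and preserves the spectral radius, so
\[r(AB+CD)=r\!\begin{pmatrix}BA & t\,BC\\ t^{-1}DA & DC\end{pmatrix}\qquad\text{for every }t>0.\]

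Next I would use $r(\cdot)\le\omega(\cdot)$ and bound the numerical radius of this scaled block matrix $M_t$ on an arbitrary unit vector $\eta=(x,y)\in\mathscr{H}\oplus\mathscr{H}$. The estimates $|\langle BAx,x\rangle|\le\omega(BA)\|x\|^2$, $|\langle DCy,y\rangle|\le\omega(DC)\|y\|^2$, together with Cauchy--Schwarz on the two off-diagonal inner products, produce
\[|\langle M_t\eta,\eta\rangle|\le \omega(BA)\|x\|^{2}+(t\|BC\|+t^{-1}\|DA\|)\|x\|\|y\|+\omega(DC)\|y\|^{2}.\]
Writing $u=\|x\|$, $v=\|y\|$ with $u^{2}+v^{2}=1$, the right-hand side is a nonnegative quadratic form whose maximum on the unit circle is the top eigenvalue of the symmetric scalar matrix with diagonal $(\omega(BA),\omega(DC))$ and off-diagonal $\tfrac12(t\|BC\|+t^{-1}\|DA\|)$. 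Consequently
\[r(AB+CD)\le \tfrac12\bigl(\omega(BA)+\omega(DC)\bigr)+\tfrac12\sqrt{(\omega(BA)-\omega(DC))^2+(t\|BC\|+t^{-1}\|DA\|)^{2}}.\]

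Finally I would optimize in $t$. By AM--GM, $t\|BC\|+t^{-1}\|DA\|$ attains its minimum value $2\sqrt{\|BC\|\,\|DA\|}$ at $t=\sqrt{\|DA\|/\|BC\|}$, with the degenerate cases $\|BC\|=0$ or $\|DA\|=0$ handled by letting $t\to 0$ or $t\to\infty$; squaring produces the $4\|BC\|\,\|DA\|$ inside the radical that the lemma requires. The crucial---and only delicate---step is introducing the scaling $t$ \emph{before} estimating the numerical radius: a direct numerical-radius bound on the unscaled block matrix $ST$ would only yield $(\|BC\|+\|DA\|)^{2}$ in place of $4\|BC\|\,\|DA\|$, which is weaker by AM--GM. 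Inserting $t$ via a similarity first and optimizing afterwards is exactly what upgrades the routine block-matrix bound to the sharp Ando--Kittaneh inequality.
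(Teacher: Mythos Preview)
The paper does not prove this lemma; it merely quotes it from the cited reference \cite{ak}. Your argument is correct and is essentially the proof given by Abu~Omar and Kittaneh in that reference: the block-matrix identity $r(AB+CD)=r\bigl(\begin{smallmatrix}BA&BC\\DA&DC\end{smallmatrix}\bigr)$ obtained from $r(TS)=r(ST)$, the diagonal similarity introducing the free parameter $t$, the passage $r(\cdot)\le\omega(\cdot)$ followed by the scalar $2\times2$ eigenvalue bound, and the AM--GM optimisation in $t$ are exactly the ingredients used there.
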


\begin{Lemma}\cite{bhatia}\label{lem_log-conv}
	Let $A,B\in \mathscr{B}\left( \mathscr{H} \right)$ be positive. Then
	$$\|A^tB^t\|\leq\|AB\|^t, 0\leq t\leq 1.$$ Further, the function $f(t)=\|A^tB^t\|$ is log-convex on $[0,1].$
\end{Lemma}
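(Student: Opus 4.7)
The plan is to deduce the norm inequality $\|A^{t}B^{t}\|\le\|AB\|^{t}$ as a consequence of the log-convexity assertion, so I would first concentrate on proving that $f(t)=\|A^{t}B^{t}\|$ is log-convex on $[0,1]$. Once that is in hand, noting that $f(0)=\|I\|=1$ and $f(1)=\|AB\|$, convexity of $\log f$ combined with these two endpoint values yields
\[
\log f(t)\le (1-t)\log f(0)+t\log f(1)=t\log\|AB\|,\qquad 0\le t\le 1,
\]
which is exactly $f(t)\le\|AB\|^{t}$.

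For log-convexity itself, I would use complex interpolation via Hadamard's three-lines theorem. One may first assume $A$ and $B$ are invertible; the general case then follows by replacing them with $A+\varepsilon I,\ B+\varepsilon I$ and letting $\varepsilon\downarrow 0$, using continuity of the functional calculus on the positive cone. Given $0\le s\le u\le 1$ and $\lambda\in[0,1]$, introduce the operator-valued function
\[
G(z)=A^{s+(u-s)z}\,B^{s+(u-s)z},\qquad 0\le\Re z\le 1,
\]
which is bounded and holomorphic on the strip because $A^{z}=\exp(z\log A)$, and the analogous expression for $B$, are entire when $A,B$ are positive and invertible.

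The crucial observation is that $\|G(z)\|$ is constant on each vertical boundary line of the strip. Indeed, setting $\alpha=(u-s)y$ and using commutativity within the functional calculus of each operator, one writes $G(iy)=A^{s}A^{i\alpha}B^{i\alpha}B^{s}$, and then verifies the identity $A^{-i\alpha}\,G(iy)\,B^{-i\alpha}=A^{s}B^{s}$; since $A^{-i\alpha}$ and $B^{-i\alpha}$ are unitary, this gives $\|G(iy)\|=f(s)$, and analogously $\|G(1+iy)\|=f(u)$. Applying Hadamard's three-lines theorem to the scalar holomorphic function $z\mapsto\langle G(z)x,y\rangle$ for unit vectors $x,y\in\mathscr{H}$, and then taking suprema, one obtains at the real point $z=\lambda$
\[
f\bigl((1-\lambda)s+\lambda u\bigr)=\|G(\lambda)\|\le f(s)^{1-\lambda}f(u)^{\lambda},
\]
which is precisely the log-convexity of $f$ on $[0,1]$.

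The main obstacle, as I see it, is the rigorous application of the three-lines theorem to an operator-valued function: this is standard but slightly delicate, typically handled by dualizing through pairs of unit vectors as indicated, and by verifying boundedness of $G$ on the closed strip (the real parts of the exponents stay in $[0,1]$, so $\|A^{\Re z}\|,\|B^{\Re z}\|$ are uniformly controlled, while the imaginary parts contribute only unitaries). Once this technicality is dealt with, everything else is a direct computation within the functional calculus of positive operators.
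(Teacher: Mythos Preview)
The paper does not supply a proof of this lemma; it is simply quoted from \cite{bhatia} as a known tool. So there is no ``paper's proof'' to compare against.

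That said, your argument is correct and is essentially the standard one. The interpolation function $G(z)=A^{s+(u-s)z}B^{s+(u-s)z}$ is bounded and analytic on the strip once $A,B$ are made invertible, the unitary conjugation $A^{-i\alpha}G(iy)B^{-i\alpha}=A^{s}B^{s}$ is checked correctly (all factors on each side commute within the functional calculus of a single operator), and the passage from the scalar three-lines theorem to the operator norm via pairing with unit vectors is the usual device. The deduction of the Cordes inequality $\|A^{t}B^{t}\|\le\|AB\|^{t}$ from log-convexity and the endpoint values $f(0)=1$, $f(1)=\|AB\|$ is clean; just note that the degenerate case $\|AB\|=0$ is also covered, since then the three-lines bound forces $G\equiv 0$ on the interior of the strip. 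The approximation $A\mapsto A+\varepsilon I$ to remove the invertibility hypothesis is legitimate because $x\mapsto x^{t}$ is continuous on $[0,\infty)$ for $t>0$, so $(A+\varepsilon I)^{t}\to A^{t}$ in norm. Nothing is missing.
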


%-------------------------------------------------------------------------------------------------------------

\section{Main Results}

\subsection{Results involving the Aluthge transform}
We begin with the following inequality, which gives a weighted upper bound in terms of the Aluthge transform. The value of this result can be seen in 
Corollary \ref{c1} below, where it turns out that this form implies refinements of \eqref{ineq_yama_1}. In the sequel, the notation $\widetilde{A}_{t}$ will be used to denote the weighted Aluthge transform defined by 
$$\widetilde{A}_{t}=|A|^{1-t}U|A|^t, 0\leq t\leq 1,$$ where $U$ is the partial isometry in the polar decomposition $A=U|A|$ of $A$.

\begin{Theorem}\label{t1}
	Let $A\in \mathscr{B}\left( \mathscr{H} \right)$ and $0\leq t\leq 1$. Then
	\begin{small}
		\[\begin{aligned}
		& \omega \left( A \right) \\ 
		& \le \frac{1}{2}\sqrt{\frac{1}{4}\left\| {{\left| A \right|}^{4t}}+{{\left| A \right|}^{4\left( 1-t \right)}} \right\|+\frac{1}{2}{{\left\| A \right\|}^{2}}+\frac{1}{4}\left\| {{\left| {{\widetilde{A}}_{t}} \right|}^{2}}+{{\left| {{\widetilde{A}}_{t}}^{*} \right|}^{2}} \right\|+\frac{1}{2}\omega \left( {{\widetilde{A}}_{t}}^{2} \right)+\left\| {{\left| A \right|}^{2t}}+{{\left| A \right|}^{2\left( 1-t \right)}} \right\|\omega \left( {{\widetilde{A}}_{t}} \right)}. 
		\end{aligned}\]
	\end{small}
\end{Theorem}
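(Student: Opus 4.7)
The plan is to combine a weighted generalization of Yamazaki's inequality \eqref{ineq_yama_1} with the Kittaneh-type refinement
\[\omega(T)^2 \le \tfrac{1}{2}\omega(T^2) + \tfrac{1}{4}\||T|^2 + |T^*|^2\|,\]
itself a direct consequence of Lemma \ref{lemma_theta} (since $4(\Re(e^{i\theta}T))^2 = 2\Re(e^{2i\theta}T^2) + |T|^2 + |T^*|^2$), applied to $T = \widetilde{A}_t$.

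The first step is to apply Lemma \ref{lem_amer} to the factorization
\[2\Re(e^{i\theta}A) = (e^{i\theta}U|A|^t)\,|A|^{1-t} + |A|^{1-t}\,(e^{-i\theta}|A|^tU^*),\]
that is, take $A_1 = e^{i\theta}U|A|^t,\; B_1 = |A|^{1-t},\; C_1 = |A|^{1-t},\; D_1 = e^{-i\theta}|A|^tU^*$. A short computation yields $B_1A_1 = e^{i\theta}\widetilde{A}_t$, $D_1C_1 = e^{-i\theta}\widetilde{A}_t^*$, $B_1C_1 = |A|^{2(1-t)}$, and $D_1A_1 = |A|^{2t}$ (using $U^*U|A|^t = |A|^t$). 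Since $2\Re(e^{i\theta}A)$ is self-adjoint, $r = \|\cdot\|$ in Lemma \ref{lem_amer}, and since $\omega(B_1A_1) = \omega(D_1C_1) = \omega(\widetilde{A}_t)$ the difference inside the square-root vanishes, producing $\|2\Re(e^{i\theta}A)\| \le \omega(\widetilde{A}_t) + \|A\|$; Lemma \ref{lemma_theta} then yields the weighted Yamazaki estimate
\[\omega(A) \le \tfrac{1}{2}\bigl(\|A\| + \omega(\widetilde{A}_t)\bigr).\]

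Squaring both sides and substituting the Kittaneh-type refinement for $\omega(\widetilde{A}_t)^2$ gives
\[4\omega(A)^2 \le \|A\|^2 + 2\|A\|\omega(\widetilde{A}_t) + \tfrac{1}{2}\omega(\widetilde{A}_t^2) + \tfrac{1}{4}\||\widetilde{A}_t|^2 + |\widetilde{A}_t^*|^2\|.\]
To arrive at the stated form, it suffices to verify
\[\tfrac{1}{2}\|A\|^2 \le \tfrac{1}{4}\||A|^{4t}+|A|^{4(1-t)}\| + \bigl(\||A|^{2t}+|A|^{2(1-t)}\| - 2\|A\|\bigr)\omega(\widetilde{A}_t).\]
Scalar AM--GM gives $\|A\|^{4t}+\|A\|^{4(1-t)} \ge 2\|A\|^2$ and $\|A\|^{2t}+\|A\|^{2(1-t)} \ge 2\|A\|$; since $\|A\|$ lies in the spectrum of $|A|$, these translate into $\||A|^{4t}+|A|^{4(1-t)}\| \ge 2\|A\|^2$ and $\||A|^{2t}+|A|^{2(1-t)}\| \ge 2\|A\|$, so both terms on the right are nonnegative and the first alone dominates $\tfrac{1}{2}\|A\|^2$.

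The main obstacle is the symmetric factorization in the first step: the four blocks must be chosen so that $\omega(B_1A_1) = \omega(D_1C_1)$, which forces the cross-difference term in Lemma \ref{lem_amer}'s square-root to vanish; the specific placement of $|A|^t$ adjacent to $U$ and $|A|^{1-t}$ adjacent to it is precisely what makes $B_1A_1 = e^{i\theta}\widetilde{A}_t$ and extracts the weighted Aluthge transform cleanly from the decomposition.
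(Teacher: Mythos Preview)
Your proof is correct but takes a genuinely different route from the paper's.

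The paper proceeds directly: it writes $\Re\langle e^{i\theta}Ax,x\rangle = \Re\langle e^{i\theta}|A|^{1-t}x,\,|A|^{t}U^{*}x\rangle$, applies the polarization identity to bound this by $\tfrac{1}{4}\|e^{i\theta}|A|^{1-t}+|A|^{t}U^{*}\|^{2}$, computes this norm as $\tfrac{1}{4}\||A|^{2t}+|A|^{2(1-t)}+2\Re(e^{i\theta}\widetilde{A}_t)\|$, and then squares the self-adjoint operator inside and applies the triangle inequality term by term. The stated bound falls out directly without passing through any intermediate numerical-radius inequality.

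Your argument is modular: you first extract the weighted Yamazaki estimate $\omega(A)\le\tfrac{1}{2}(\|A\|+\omega(\widetilde{A}_t))$ via Lemma~\ref{lem_amer}, then feed $\omega(\widetilde{A}_t)^{2}\le\tfrac{1}{2}\omega(\widetilde{A}_t^{2})+\tfrac{1}{4}\||\widetilde{A}_t|^{2}+|\widetilde{A}_t^{*}|^{2}\|$ into the square, and finally \emph{weaken} the result using the spectral inequalities $\||A|^{4t}+|A|^{4(1-t)}\|\ge 2\|A\|^{2}$ and $\||A|^{2t}+|A|^{2(1-t)}\|\ge 2\|A\|$ to reach the stated form. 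This has the side effect of showing that the inequality of Theorem~\ref{t1} is actually a consequence of (and hence no sharper than) the combination of the weighted Yamazaki bound and the Kittaneh-type bound for $\widetilde{A}_t$; the paper's direct derivation does not expose this. On the other hand, the paper's approach is self-contained and avoids invoking the Abu~Omar--Kittaneh spectral-radius lemma.
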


\begin{proof}
	Let $A=U|A|$ be the polar decomposition of $A$. Noting the identity
	$$\Re\left<x,y\right>=\frac{1}{4}\left(\|x+y\|^2-\|x-y\|^2\right), x,y\in\mathscr{H},$$ we have
	\[\begin{aligned}
	\operatorname{\Re}\left\langle {{e}^{i\theta }}Ax,x \right\rangle &=\operatorname{\Re}\left\langle {{e}^{i\theta }}U\left| A \right|x,x \right\rangle  \\ 
	& =\operatorname{\Re}\left\langle {{e}^{i\theta }}U{{\left| A \right|}^{t}}{{\left| A \right|}^{1-t}}x,x \right\rangle  \\ 
	& =\operatorname{\Re}\left\langle {{e}^{i\theta }}{{\left| A \right|}^{1-t}}x,{{\left| A \right|}^{t}}{{U}^{*}}x \right\rangle  \\ 
	& =\frac{1}{4}{{\left\| \left( {{e}^{i\theta }}{{\left| A \right|}^{1-t}}+{{\left| A \right|}^{t}}{{U}^{*}} \right)x \right\|}^{2}}-\frac{1}{4}{{\left\| \left( {{e}^{i\theta }}{{\left| A \right|}^{1-t}}-{{\left| A \right|}^{t}}{{U}^{*}} \right)x \right\|}^{2}} \\ 
	& \le \frac{1}{4}{{\left\| \left( {{e}^{i\theta }}{{\left| A \right|}^{1-t}}+{{\left| A \right|}^{t}}{{U}^{*}} \right)x \right\|}^{2}} \\ 
	& \le \frac{1}{4}{{\left\| {{e}^{i\theta }}{{\left| A \right|}^{1-t}}+{{\left| A \right|}^{t}}{{U}^{*}} \right\|}^{2}}.  
	\end{aligned}\]
	On the other hand,
	\[\begin{aligned}
	& \frac{1}{4}{{\left\| {{e}^{i\theta }}{{\left| A \right|}^{1-t}}+{{\left| A \right|}^{t}}{{U}^{*}} \right\|}^{2}} \\ 
	& =\frac{1}{4}\left\| \left( {{e}^{i\theta }}{{\left| A \right|}^{1-t}}+{{\left| A \right|}^{t}}{{U}^{*}} \right){{\left( {{e}^{i\theta }}{{\left| A \right|}^{1-t}}+{{\left| A \right|}^{t}}{{U}^{*}} \right)}^{*}} \right\| \\ 
	& =\frac{1}{4}\left\| {{\left| A \right|}^{2t}}+{{\left| A \right|}^{2\left( 1-t \right)}}+{{e}^{i\theta }}{{\widetilde{A}}_{t}}+{{e}^{-i\theta }}{{\widetilde{A}}_{t}}^{*} \right\| \\ 
	& =\frac{1}{4}{{\left\| {{\left( {{\left| A \right|}^{2t}}+{{\left| A \right|}^{2\left( 1-t \right)}}+{{e}^{i\theta }}{{\widetilde{A}}_{t}}+{{e}^{-i\theta }}{{\widetilde{A}}_{t}}^{*} \right)}^{2}} \right\|}^{\frac{1}{2}}} \\ 
	& =\frac{1}{4}\left\| {{\left( {{\left| A \right|}^{2t}}+{{\left| A \right|}^{2\left( 1-t \right)}} \right)}^{2}}+{{\left( {{e}^{i\theta }}{{\widetilde{A}}_{t}}+{{e}^{-i\theta }}{{\widetilde{A}}_{t}}^{*} \right)}^{2}} \right. \\ 
	&\qquad {{\left. +\left( {{\left| A \right|}^{2t}}+{{\left| A \right|}^{2\left( 1-t \right)}} \right)\left( {{e}^{i\theta }}{{\widetilde{A}}_{t}}+{{e}^{-i\theta }}{{\widetilde{A}}_{t}}^{*} \right)+\left( {{e}^{i\theta }}{{\widetilde{A}}_{t}}+{{e}^{-i\theta }}{{\widetilde{A}}_{t}}^{*} \right)\left( {{\left| A \right|}^{2t}}+{{\left| A \right|}^{2\left( 1-t \right)}} \right) \right\|}^{\frac{1}{2}}} \\ 
	& \le \frac{1}{4}\left( {{\left\| {{\left| A \right|}^{2t}}+{{\left| A \right|}^{2\left( 1-t \right)}} \right\|}^{2}}+{{\left\| {{e}^{i\theta }}{{\widetilde{A}}_{t}}+{{e}^{-i\theta }}{{\widetilde{A}}_{t}}^{*} \right\|}^{2}} \right. \\ 
	&\qquad {{\left. +\left\| \left( {{\left| A \right|}^{2t}}+{{\left| A \right|}^{2\left( 1-t \right)}} \right)\left( {{e}^{i\theta }}{{\widetilde{A}}_{t}}+{{e}^{-i\theta }}{{\widetilde{A}}_{t}}^{*} \right) \right\|+\left\| \left( {{e}^{i\theta }}{{\widetilde{A}}_{t}}+{{e}^{-i\theta }}{{\widetilde{A}}_{t}}^{*} \right)\left( {{\left| A \right|}^{2t}}+{{\left| A \right|}^{2\left( 1-t \right)}} \right) \right\| \right)}^{\frac{1}{2}}} \\ 
	& = \frac{1}{4}\left( \left\| {{\left| A \right|}^{4t}}+{{\left| A \right|}^{4\left( 1-t \right)}}+2{{\left| A \right|}^{2}} \right\|+\left\| {{\left| {{\widetilde{A}}_{t}} \right|}^{2}}+{{\left| {{\widetilde{A}}_{t}}^{*} \right|}^{2}}+2\operatorname{Re}\left( {{e}^{2i\theta }}{{\widetilde{A}}_{t}}^{2} \right) \right\| \right. \\ 
	&\qquad {{\left. +\left\| \left( {{\left| A \right|}^{2t}}+{{\left| A \right|}^{2\left( 1-t \right)}} \right)\left( {{e}^{i\theta }}{{\widetilde{A}}_{t}}+{{e}^{-i\theta }}{{\widetilde{A}}_{t}}^{*} \right) \right\|+\left\| \left( {{e}^{i\theta }}{{\widetilde{A}}_{t}}+{{e}^{-i\theta }}{{\widetilde{A}}_{t}}^{*} \right)\left( {{\left| A \right|}^{2t}}+{{\left| A \right|}^{2\left( 1-t \right)}} \right) \right\| \right)}^{\frac{1}{2}}} \\ 
	& \le \frac{1}{4}\left( \left\| {{\left| A \right|}^{4t}}+{{\left| A \right|}^{4\left( 1-t \right)}}+2{{\left| A \right|}^{2}} \right\|+\left\| {{\left| {{\widetilde{A}}_{t}} \right|}^{2}}+{{\left| {{\widetilde{A}}_{t}}^{*} \right|}^{2}}+2\operatorname{Re}\left( {{e}^{2i\theta }}{{\widetilde{A}}_{t}}^{2} \right) \right\| \right. \\ 
	&\qquad {{\left. +4\left\| {{\left| A \right|}^{2t}}+{{\left| A \right|}^{2\left( 1-t \right)}} \right\|\left\| \operatorname{Re}\left( {{e}^{i\theta }}{{\widetilde{A}}_{t}} \right) \right\| \right)}^{\frac{1}{2}}} \\ 
	& \le \frac{1}{4}\left( \left\| {{\left| A \right|}^{4t}}+{{\left| A \right|}^{4\left( 1-t \right)}} \right\|+2{{\left\| A \right\|}^{2}}+\left\| {{\left| {{\widetilde{A}}_{t}} \right|}^{2}}+{{\left| {{\widetilde{A}}_{t}}^{*} \right|}^{2}} \right\|+2\left\| \operatorname{Re}\left( {{e}^{2i\theta }}{{\widetilde{A}}_{t}}^{2} \right) \right\| \right. \\ 
	&\qquad {{\left. +4\left\| {{\left| A \right|}^{2t}}+{{\left| A \right|}^{2\left( 1-t \right)}} \right\|\left\| \operatorname{Re}\left( {{e}^{i\theta }}{{\widetilde{A}}_{t}} \right) \right\| \right)}^{\frac{1}{2}}} \\ 
	& =\frac{1}{2}\left( \frac{1}{4}\left\| {{\left| A \right|}^{4t}}+{{\left| A \right|}^{4\left( 1-t \right)}} \right\|+\frac{1}{2}{{\left\| A \right\|}^{2}}+\frac{1}{4}\left\| {{\left| {{\widetilde{A}}_{t}} \right|}^{2}}+{{\left| {{\widetilde{A}}_{t}}^{*} \right|}^{2}} \right\|+\frac{1}{2}\left\| \operatorname{Re}\left( {{e}^{2i\theta }}{{\widetilde{A}}_{t}}^{2} \right) \right\| \right. \\ 
	&\qquad {{\left. +\left\| {{\left| A \right|}^{2t}}+{{\left| A \right|}^{2\left( 1-t \right)}} \right\|\left\| \operatorname{Re}\left( {{e}^{i\theta }}{{\widetilde{A}}_{t}} \right) \right\| \right)}^{\frac{1}{2}}} \\ 
	& \le \frac{1}{2}\left( \frac{1}{4}\left\| {{\left| A \right|}^{4t}}+{{\left| A \right|}^{4\left( 1-t \right)}} \right\|+\frac{1}{2}{{\left\| A \right\|}^{2}}+\frac{1}{4}\left\| {{\left| {{\widetilde{A}}_{t}} \right|}^{2}}+{{\left| {{\widetilde{A}}_{t}}^{*} \right|}^{2}} \right\|+\frac{1}{2}\omega \left( {{\widetilde{A}}_{t}}^{2} \right) \right. \\ 
	&\qquad {{\left. +\left\| {{\left| A \right|}^{2t}}+{{\left| A \right|}^{2\left( 1-t \right)}} \right\|\omega \left( {{\widetilde{A}}_{t}} \right) \right)}^{\frac{1}{2}}}.  
	\end{aligned}\]
	Thus,
	\begin{small}
		\[\begin{aligned}
		& \omega \left( A \right) \\ 
		& \le \frac{1}{2}\sqrt{\frac{1}{4}\left\| {{\left| A \right|}^{4t}}+{{\left| A \right|}^{4\left( 1-t \right)}} \right\|+\frac{1}{2}{{\left\| A \right\|}^{2}}+\frac{1}{4}\left\| {{\left| {{\widetilde{A}}_{t}} \right|}^{2}}+{{\left| {{\widetilde{A}}_{t}}^{*} \right|}^{2}} \right\|+\frac{1}{2}\omega \left( {{\widetilde{A}}_{t}}^{2} \right)+\left\| {{\left| A \right|}^{2t}}+{{\left| A \right|}^{2\left( 1-t \right)}} \right\|\omega \left( {{\widetilde{A}}_{t}} \right)}. 
		\end{aligned}\]
	\end{small}
	This completes the proof of the theorem.
\end{proof}

Letting $t=\frac{1}{2}$ in Theorem \ref{t1}, we reach the following result, whose significance is explained next.
\begin{Corollary}\label{c1}
	Let $A\in \mathscr{B}\left( \mathscr{H} \right)$. Then
	\[\omega \left( A \right)\le \frac{1}{2}\sqrt{{{\left\| A \right\|}^{2}}+\frac{1}{4}\left\| {{\left| \widetilde{A} \right|}^{2}}+{{\left| {{\widetilde{A}}^{*}} \right|}^{2}} \right\|+\frac{1}{2}\omega \left( {{\widetilde{A}}^{2}} \right)+2\left\| A \right\|\omega \left( \widetilde{A} \right)}.\]
\end{Corollary}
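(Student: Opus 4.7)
The plan is to obtain Corollary \ref{c1} as a direct specialization of Theorem \ref{t1} to the midpoint $t=\tfrac{1}{2}$, for which the weighted Aluthge transform collapses to the standard one and several composite terms simplify. First I would observe that $\widetilde{A}_{1/2}=|A|^{1/2}U|A|^{1/2}=\widetilde{A}$, so the symbols $\widetilde{A}_{t}$ and $\widetilde{A}_{t}^{*}$ appearing in Theorem \ref{t1} become $\widetilde{A}$ and $\widetilde{A}^{*}$, and the terms $\omega(\widetilde{A}_t)$, $\omega(\widetilde{A}_t^2)$ and $\||\widetilde{A}_t|^2+|\widetilde{A}_t^*|^2\|$ reproduce the corresponding expressions in the target inequality.

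Next, I would simplify the two $|A|$-based terms. At $t=\tfrac{1}{2}$ we have $|A|^{4t}+|A|^{4(1-t)}=2|A|^2$ and $|A|^{2t}+|A|^{2(1-t)}=2|A|$, hence, using $\||A|^2\|=\|A\|^2$ and $\||A|\|=\|A\|$,
\[
\tfrac{1}{4}\bigl\||A|^{4t}+|A|^{4(1-t)}\bigr\|=\tfrac{1}{2}\|A\|^{2},\qquad \bigl\||A|^{2t}+|A|^{2(1-t)}\bigr\|=2\|A\|.
\]
Substituting these into the bound of Theorem \ref{t1} consolidates the $\|A\|^{2}$ contributions into $\tfrac{1}{2}\|A\|^{2}+\tfrac{1}{2}\|A\|^{2}=\|A\|^{2}$, and turns the coefficient of $\omega(\widetilde{A})$ into $2\|A\|$, while the remaining two summands $\tfrac{1}{4}\||\widetilde{A}|^{2}+|\widetilde{A}^{*}|^{2}\|$ and $\tfrac{1}{2}\omega(\widetilde{A}^{2})$ are unchanged. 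Collecting terms under the square root yields exactly the asserted bound.

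There is essentially no technical obstacle here beyond bookkeeping; the only thing to be careful about is not forgetting the factor $\tfrac{1}{2}$ in front of $\|A\|^{2}$ coming from $\tfrac{1}{4}\cdot 2$ versus the separately present $\tfrac{1}{2}\|A\|^2$ term, which together assemble the coefficient $1$. After that, the statement is read off verbatim, completing the derivation.
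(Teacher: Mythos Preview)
Your proposal is correct and follows exactly the paper's approach: the corollary is obtained from Theorem~\ref{t1} by the substitution $t=\tfrac{1}{2}$, after which $\widetilde{A}_{1/2}=\widetilde{A}$ and the $|A|$-terms simplify precisely as you describe.
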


The significance of Corollary \ref{c1} is shown in the next remark, where multiple refinements of \eqref{ineq_kitt_3} are found. 
\begin{Remark}
	Taking Lemmas \ref{lemma_aluthge_1} and \ref{lemma_powers} into account, it follows from Corollary \ref{c1} that
	\[\begin{aligned}
	\omega \left( A \right)&\le \frac{1}{2}\sqrt{{{\left\| A \right\|}^{2}}+\frac{1}{4}\left\| {{\left| \widetilde{A} \right|}^{2}}+{{\left| {{\widetilde{A}}^{*}} \right|}^{2}} \right\|+\frac{1}{2}\omega \left( {{\widetilde{A}}^{2}} \right)+2\left\| A \right\|\omega \left( \widetilde{A} \right)} \\ 
	& \le \frac{1}{2}\sqrt{{{\left\| A \right\|}^{2}}+\frac{1}{2}{{\left\| \widetilde{A} \right\|}^{2}}+\frac{1}{2}\omega \left( {{\widetilde{A}}^{2}} \right)+2\left\| A \right\|\omega \left( \widetilde{A} \right)} \\ 
	& \le \frac{1}{2}\sqrt{{{\left\| A \right\|}^{2}}+\frac{1}{2}{{\left\| \widetilde{A} \right\|}^{2}}+\frac{1}{2}{{\omega }^{2}}\left( \widetilde{A} \right)+2\left\| A \right\|\omega \left( \widetilde{A} \right)} \\ 
	& \le \frac{1}{2}\sqrt{{{\left\| A \right\|}^{2}}+{{\left\| \widetilde{A} \right\|}^{2}}+2\left\| A \right\|\left\| \widetilde{A} \right\|} \\ 
	&=\frac{1}{2}\sqrt{\left(\|A\|+\left\|\widetilde{A}\right\|\right)^2}\\
	&=\frac{1}{2}\left(\|A\|+\left\|\widetilde{A}\right\|\right)\\
	&\le \frac{1}{2}\left( \left\| A \right\|+{{\left\| {{A}^{2}} \right\|}^{\frac{1}{2}}} \right).
	\end{aligned}\]
\end{Remark}

%-------------------------------------------------------------------------------------------------------------

\subsection{Other weighted forms}

Now we move to another type of weighted versions. The following presents the general form of \eqref{ineq_kitt_2}, which gives \eqref{ineq_kitt_2} upon letting $t=\frac{1}{2}.$
\begin{Theorem}\label{thm_1}
	Let $A\in \mathscr{B}\left( \mathscr{H} \right)$. Then
	\[{{\omega }}\left( A \right)^{2}\le \underset{0\le t\le 1}{\mathop{\min }}\,\left\| \left( 1-t \right){{\left| A \right|}^{\frac{1}{1-t}}}+t{{\left| {{A}^{*}} \right|}^{\frac{1}{t}}} \right\|.\]
\end{Theorem}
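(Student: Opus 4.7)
The plan is to combine the mixed Schwarz (Kato) inequality of Lemma \ref{11} at the symmetric parameter value with the standard Young inequality for the conjugate exponents $p=\frac{1}{1-t}$ and $q=\frac{1}{t}$, then absorb the resulting scalar powers into the operators via the McCarthy inequality of Lemma \ref{12}. The role of the weight $t$ in the theorem is to be the Young parameter, distinct from the Kato parameter.

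More concretely, I would fix $t\in(0,1)$ and a unit vector $x\in\mathscr{H}$ and proceed in three small steps. First, apply Lemma \ref{11} with its parameter equal to $\tfrac12$; this is the only choice that, after the subsequent Young step, will produce exactly the exponents $\tfrac{1}{1-t}$ and $\tfrac1t$. This gives
\[
|\langle Ax,x\rangle|^{2}\le \langle |A|x,x\rangle\,\langle |A^{*}|x,x\rangle .
\]
Second, applying Young's inequality $ab\le (1-t)a^{1/(1-t)}+t\,b^{1/t}$ with $a=\langle|A|x,x\rangle$ and $b=\langle|A^{*}|x,x\rangle$ yields
\[
|\langle Ax,x\rangle|^{2}\le (1-t)\langle |A|x,x\rangle^{\frac{1}{1-t}}+t\,\langle |A^{*}|x,x\rangle^{\frac{1}{t}}.
\]
Third, since $\tfrac{1}{1-t}\ge 1$ and $\tfrac1t\ge 1$, the first part of Lemma \ref{12} applied to the positive operators $|A|$ and $|A^{*}|$ gives
\[
\langle |A|x,x\rangle^{\frac{1}{1-t}}\le \langle |A|^{\frac{1}{1-t}}x,x\rangle,\qquad \langle |A^{*}|x,x\rangle^{\frac{1}{t}}\le \langle |A^{*}|^{\frac{1}{t}}x,x\rangle .
\]

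Combining these three steps and using linearity of the inner product gives
\[
|\langle Ax,x\rangle|^{2}\le \bigl\langle \bigl((1-t)|A|^{\frac{1}{1-t}}+t|A^{*}|^{\frac{1}{t}}\bigr)x,x\bigr\rangle .
\]
Taking the supremum over unit vectors $x$ bounds the left side by $\omega(A)^{2}$ and the right side by $\|(1-t)|A|^{\frac{1}{1-t}}+t|A^{*}|^{\frac{1}{t}}\|$, since the right-hand operator is positive. Finally, because the resulting bound holds for every $t\in(0,1)$, one may infimize over $t$; the endpoints $t=0$ and $t=1$ are handled either as limits or by noting that the unweighted Kittaneh bound \eqref{ineq_kitt_2} (recovered at $t=\tfrac12$) is already below the trivial endpoint estimates. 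Specializing to $t=\tfrac12$ returns \eqref{ineq_kitt_2}, confirming that Theorem \ref{thm_1} is indeed a weighted extension of it.

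The only genuinely delicate point is matching the exponents: one must notice that the desired right-hand side forces the Kato parameter to be exactly $\tfrac12$ (so that the scalar quantities to be fed into Young are $\langle|A|x,x\rangle$ and $\langle|A^{*}|x,x\rangle$, whose Young exponents $\tfrac{1}{1-t}$ and $\tfrac1t$ then match the powers demanded by the statement). After that alignment, the proof is a direct chain Kato $\to$ Young $\to$ McCarthy $\to$ sup, with no additional estimates required.
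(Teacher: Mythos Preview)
Your proposal is correct and follows essentially the same route as the paper: Kato's inequality at parameter $\tfrac12$, the weighted arithmetic--geometric (Young) inequality, and McCarthy's inequality, followed by the supremum over unit vectors. The only cosmetic difference is that the paper applies McCarthy first (in the form $\langle |A|x,x\rangle\le\langle |A|^{1/(1-t)}x,x\rangle^{1-t}$, using the $0\le r\le 1$ case of Lemma~\ref{12}) and then the AM--GM step, whereas you apply Young first and then the $r\ge 1$ case of Lemma~\ref{12}; these two orderings are equivalent and yield the identical bound.
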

\begin{proof}
	Let $x\in \mathscr{H}$ be a unit vector. Then 
	\[\begin{aligned}
	{{\left| \left\langle Ax,x \right\rangle  \right|}^{2}}&\le \left\langle \left| A \right|x,x \right\rangle \left\langle \left| {{A}^{*}} \right|x,x \right\rangle  \\ 
	& =\left\langle {{\left| A \right|}^{\frac{1-t}{1-t}}}x,x \right\rangle \left\langle {{\left| {{A}^{*}} \right|}^{\frac{t}{t}}}x,x \right\rangle  \\ 
	& \le {{\left\langle {{\left| A \right|}^{\frac{1}{1-t}}}x,x \right\rangle }^{1-t}}{{\left\langle {{\left| {{A}^{*}} \right|}^{\frac{1}{t}}}x,x \right\rangle }^{t}} \\ 
	& \le \left( 1-t \right)\left\langle {{\left| A \right|}^{\frac{1}{1-t}}}x,x \right\rangle +t\left\langle {{\left| {{A}^{*}} \right|}^{\frac{1}{t}}}x,x \right\rangle  \\ 
	& =\left\langle \left( \left( 1-t \right){{\left| A \right|}^{\frac{1}{1-t}}}+t{{\left| {{A}^{*}} \right|}^{\frac{1}{t}}} \right)x,x \right\rangle  \\ 
	& \le \left\| \left( 1-t \right){{\left| A \right|}^{\frac{1}{1-t}}}+t{{\left| {{A}^{*}} \right|}^{\frac{1}{t}}} \right\|, 
	\end{aligned}\]
	where the first inequality follows from Lemma \ref{1} using $y=x$ and $t=\frac{1}{2}$, while the second inequality is obtained using the second inequality in Lemma \ref{12} and the third inequality is obtained by the arithmetic-geometric mean inequality.
	Thus, we have shown
	\[{{\left| \left\langle Ax,x \right\rangle  \right|}^{2}}\le \left\| \left( 1-t \right){{\left| A \right|}^{\frac{1}{1-t}}}+t{{\left| {{A}^{*}} \right|}^{\frac{1}{t}}} \right\|.\]
	By taking supremum over all unit vectors $x\in \mathscr{H}$, we get
	\[{{\omega }}\left( A \right)^{2}\le \left\| \left( 1-t \right){{\left| A \right|}^{\frac{1}{1-t}}}+t{{\left| {{A}^{*}} \right|}^{\frac{1}{t}}} \right\|.\]
Now, by taking infimum over all $t\in \left[ 0,1 \right]$, we infer that
	\[{{\omega }}\left( A \right)^{2}\le \underset{0\le t\le 1}{\mathop{\min }}\,\left\| \left( 1-t \right){{\left| A \right|}^{\frac{1}{1-t}}}+t{{\left| {{A}^{*}} \right|}^{\frac{1}{t}}} \right\|,\]
	which completes the proof.
\end{proof}

On the other hand, a weighted version of \eqref{ineq_kitt_1} can be stated as follows. Although the form is different from \eqref{ineq_kitt_1}, we will show in Remark \ref{rem_ned} how \eqref{000} below implies \eqref{ineq_kitt_1}. 

\begin{Theorem}\label{thm_R}
	Let $A\in \mathscr{B}\left( \mathscr{H} \right)$ and let $0\le t\le 1$. Then
	\begin{equation}\label{000}
	{{\omega }}\left( A \right)^{2}\le \frac{1}{2}\left\| {{\left| A \right|}^{2}}+{{\left| {{A}^{*}} \right|}^{2}}-\frac{t\left( 1-t \right)}{R}{{\left( {{\left| A \right|}}-\left| {{A}^{*}} \right| \right)}^{2}} \right\|,
	\end{equation}
	where $R=\max \left\{ t,1-t \right\}$.
\end{Theorem}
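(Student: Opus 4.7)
The plan is to interpolate between two classical upper estimates for $|\langle Ax,x\rangle|^{2}$ via a convex combination whose weights are dictated by the quantity $\frac{t(1-t)}{R}$. Fix a unit vector $x\in\mathscr{H}$. Lemma \ref{11} with $y=x$ and exponent $\frac{1}{2}$ provides the starting point
\[
|\langle Ax,x\rangle|^{2}\le \langle|A|x,x\rangle\,\langle|A^{*}|x,x\rangle.
\]
From this single inequality I would extract two different upper bounds. First, applying Lemma \ref{12} with $r=2$ to each of the positive operators $|A|$ and $|A^{*}|$ and then the arithmetic--geometric mean inequality gives
\[
|\langle Ax,x\rangle|^{2}\le \tfrac{1}{2}\langle(|A|^{2}+|A^{*}|^{2})x,x\rangle,
\]
which is the standard derivation of \eqref{ineq_kitt_2}. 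Second, applying AM--GM to the product first and then Lemma \ref{12} with $r=2$ to the positive operator $|A|+|A^{*}|$ gives
\[
|\langle Ax,x\rangle|^{2}\le \tfrac{1}{4}\langle(|A|+|A^{*}|)^{2}x,x\rangle,
\]
which is essentially the squared form of \eqref{ineq_kitt_1}.

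The crucial elementary observation is that $r_{0}:=\frac{t(1-t)}{R}$ equals $\min\{t,1-t\}$ for every $t\in[0,1]$, so $r_{0}\in[0,\tfrac{1}{2}]$ and the pair $(1-2r_{0},\,2r_{0})$ constitutes a legitimate convex combination. I would combine the two bounds above with these weights to obtain
\[
|\langle Ax,x\rangle|^{2}\le \tfrac{1-2r_{0}}{2}\langle(|A|^{2}+|A^{*}|^{2})x,x\rangle+\tfrac{r_{0}}{2}\langle(|A|+|A^{*}|)^{2}x,x\rangle.
\]
Expanding $(|A|+|A^{*}|)^{2}=|A|^{2}+|A^{*}|^{2}+|A||A^{*}|+|A^{*}||A|$ and then substituting $|A||A^{*}|+|A^{*}||A|=|A|^{2}+|A^{*}|^{2}-(|A|-|A^{*}|)^{2}$ collapses the right-hand side to
\[
\tfrac{1}{2}\langle\,[\,|A|^{2}+|A^{*}|^{2}-r_{0}(|A|-|A^{*}|)^{2}\,]x,x\rangle.
\]
Estimating this by the operator norm of the bracketed (self-adjoint) operator and taking the supremum over unit vectors $x$ delivers the claim.

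The main obstacle is really just recognizing the identity $\frac{t(1-t)}{R}=\min\{t,1-t\}\in[0,\tfrac{1}{2}]$; this reveals the statement as a one-parameter interpolation between \eqref{ineq_kitt_2} (at $t=0$ or $t=1$) and the squared form of \eqref{ineq_kitt_1} (at $t=\tfrac{1}{2}$). Once this framing is in place, both the convex combination step and the algebraic collapse amount to routine bookkeeping, with no hidden analytic difficulty.
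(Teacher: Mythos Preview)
Your proof is correct and complete. The key insight---that $\frac{t(1-t)}{R}=\min\{t,1-t\}\in[0,\tfrac12]$---is spot on, and the convex combination of the two quadratic-form estimates collapses exactly as you claim.

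Your route is genuinely different from the paper's. The paper first invokes an external convexity lemma (Lemma~3.12 of \cite{sab_laa}) together with an algebraic identity to establish the \emph{operator} inequality
\[
\Bigl(\tfrac{|A|+|A^{*}|}{2}\Bigr)^{2}\le \tfrac{|A|^{2}+|A^{*}|^{2}}{2}-\tfrac{t(1-t)}{2R}\,(|A|-|A^{*}|)^{2},
\]
and only then combines it with $|\langle Ax,x\rangle|^{2}\le\langle(\tfrac{|A|+|A^{*}|}{2})^{2}x,x\rangle$. You bypass the external lemma entirely: once the identification $r_{0}=\min\{t,1-t\}$ is made, the result is revealed as a bare convex interpolation between \eqref{ineq_kitt_2} and the squared form of \eqref{ineq_kitt_1}, needing only Lemmas~\ref{11} and \ref{12} plus AM--GM. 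Your argument is therefore more self-contained and more transparent about what the weighted inequality actually says. The paper's route, on the other hand, isolates the displayed operator inequality as a byproduct---though, as your computation implicitly shows, that inequality is itself equivalent to the trivial statement $\tfrac{1-2r_{0}}{4}(|A|-|A^{*}|)^{2}\ge 0$ and did not require the cited lemma.
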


\begin{proof}
	By Lemma 3.12 in \cite{sab_laa},
	\begin{equation}\label{03}
	\left( 1-t \right){{\left| A \right|}^{2}}+t{{\left| {{A}^{*}} \right|}^{2}}\le {{\left( \left( 1-t \right)\left| A \right|+t\left| {{A}^{*}} \right| \right)}^{2}}+2R\left( \frac{{{\left| A \right|}^{2}}+{{\left| {{A}^{*}} \right|}^{2}}}{2}-{{\left( \frac{\left| A \right|+\left| {{A}^{*}} \right|}{2} \right)}^{2}} \right),
	\end{equation}
	where $0\le t\le 1$ and $R=\max \left\{ t,1-t \right\}$. 
	
	On the other hand,
	\[\begin{aligned}
	& \left( 1-t \right){{\left| A \right|}^{2}}+t{{\left| {{A}^{*}} \right|}^{2}}-{{\left( \left( 1-t \right)\left| A \right|+t\left| {{A}^{*}} \right| \right)}^{2}}-{{\left( 1-t \right)}^{2}}{{\left( \left| A \right|-\left| {{A}^{*}} \right| \right)}^{2}} \\ 
	& =\left( 1-t \right){{\left| A \right|}^{2}}+t{{\left| {{A}^{*}} \right|}^{2}}-\left( {{\left( 1-t \right)}^{2}}{{\left| A \right|}^{2}}+{{t}^{2}}{{\left| {{A}^{*}} \right|}^{2}}+t\left( 1-t \right)\left( \left| A \right|\left| {{A}^{*}} \right|+\left| {{A}^{*}} \right|\left| A \right| \right) \right) \\ 
	& -{{\left( 1-t \right)}^{2}}\left( {{\left| A \right|}^{2}}+{{\left| {{A}^{*}} \right|}^{2}}-\left( \left| A \right|\left| {{A}^{*}} \right|+\left| {{A}^{*}} \right|\left| A \right| \right) \right) \\ 
	& =\left( 1-t \right)\left( 2t-1 \right)\left( {{\left| A \right|}^{2}}+{{\left| {{A}^{*}} \right|}^{2}}-\left| A \right|\left| {{A}^{*}} \right|-\left| {{A}^{*}} \right|\left| A \right| \right) \\ 
	& =\left( 1-t \right)\left( 2t-1 \right){{\left( \left| A \right|-\left| {{A}^{*}} \right| \right)}^{2}}.  
	\end{aligned}\]
	Hence,
	\[\left( 1-t \right){{\left| A \right|}^{2}}+t{{\left| {{A}^{*}} \right|}^{2}}-{{\left( \left( 1-t \right)\left| A \right|+t\left| {{A}^{*}} \right| \right)}^{2}}-{{\left( 1-t \right)}^{2}}{{\left( \left| A \right|-\left| {{A}^{*}} \right| \right)}^{2}}=\left( 1-t \right)\left( 2t-1 \right){{\left( \left| A \right|-\left| {{A}^{*}} \right| \right)}^{2}}.\]
	Consequently,
	\begin{equation}\label{04}
	\left( 1-t \right){{\left| A \right|}^{2}}+t{{\left| {{A}^{*}} \right|}^{2}}-{{\left( \left( 1-t \right)\left| A \right|+t\left| {{A}^{*}} \right| \right)}^{2}}=t\left( 1-t \right){{\left( \left| A \right|-\left| {{A}^{*}} \right| \right)}^{2}}.
	\end{equation}
	By \eqref{04} and \eqref{03}, we have
	\begin{equation}\label{1}
	{{\left( \frac{\left| A \right|+\left| {{A}^{*}} \right|}{2} \right)}^{2}}\le \frac{{{\left| A \right|}^{2}}+{{\left| {{A}^{*}} \right|}^{2}}}{2}-\frac{t\left( 1-t \right)}{2R}{{\left( \left| A \right|-\left| {{A}^{*}} \right| \right)}^{2}}.
	\end{equation}
	On the other hand,
	\begin{equation}\label{05}
	\begin{aligned}
	{{\left| \left\langle Ax,x \right\rangle  \right|}^{2}}&\le \left\langle \left| A \right|x,x \right\rangle \left\langle \left| {{A}^{*}} \right|x,x \right\rangle \quad \text{(by Lemma \ref{11})} \\ 
	& \le {{\left( \frac{\left\langle \left| A \right|x,x \right\rangle +\left\langle \left| {{A}^{*}} \right|x,x \right\rangle }{2} \right)}^{2}} \\ 
	&\qquad \text{(by the arithmetic-geometric mean inequality)}\\
	& ={{\left\langle \left( \frac{\left| A \right|+\left| {{A}^{*}} \right|}{2} \right)x,x \right\rangle }^{2}} \\ 
	& \le \left\langle {{\left( \frac{\left| A \right|+\left| {{A}^{*}} \right|}{2} \right)}^{2}}x,x \right\rangle   \quad \text{(by Lemma \ref{12})}.
	\end{aligned}
	\end{equation}
	Therefore,
	\begin{equation}\label{16}
	{{\left| \left\langle Ax,x \right\rangle  \right|}^{2}}\le \left\langle \left( \frac{{{\left| A \right|}^{2}}+{{\left| {{A}^{*}} \right|}^{2}}}{2}-\frac{t\left( 1-t \right)}{2R}{{\left( {{\left| A \right|}}-\left| {{A}^{*}} \right| \right)}^{2}} \right)x,x \right\rangle .
	\end{equation}
	Taking the supremum in \eqref{16} over $x\in\mathscr{H}$, $\left\| x \right\|=1$ we deduce the desired result.
\end{proof}

\begin{Remark}
	It follows from the inequality \eqref{03} that
	\[\frac{\left| A \right|\left| {{A}^{*}} \right|+\left| {{A}^{*}} \right|\left| A \right|}{2}\le \frac{{{\left| A \right|}^{2}}+{{\left| {{A}^{*}} \right|}^{2}}}{2}-\frac{1}{R}\left( \left( 1-t \right){{\left| A \right|}^{2}}+t{{\left| {{A}^{*}} \right|}^{2}}-{{\left( \left( 1-t \right)\left| A \right|+t\left| {{A}^{*}} \right| \right)}^{2}} \right).\]
\end{Remark}

\begin{Remark}\label{rem_ned}
	Analyzing the inequality \eqref{000}, we see that the inequality is best attained when $\frac{t(1-t)}{R}$  attains its maximum value. This occurs when $t=1/2$ and the maximum is $1/2.$ Substituting these values into \eqref{000} we reach the following
	\[\begin{aligned}
	{{\omega }}\left( A \right)^{2}&\le \left\| \frac{{{\left| A \right|}^{2}}+{{\left| {{A}^{*}} \right|}^{2}}}{2}-\frac{1}{4}{{\left( \left| A \right|-\left| {{A}^{*}} \right| \right)}^{2}} \right\| \\ 
	& =\left\| {{\left( \frac{\left| A \right|+\left| {{A}^{*}} \right|}{2} \right)}^{2}} \right\| \\ 
	& ={{\left\| \frac{\left| A \right|+\left| {{A}^{*}} \right|}{2} \right\|}^{2}}.  
	\end{aligned}\]
	Thus, we have shown that
	$$\omega(A)\leq\frac{1}{2}\|\;|A|+|A^*|\;\|,$$ which is \eqref{ineq_kitt_1}. Thus, Theorem \ref{thm_R} provides a new proof of \eqref{ineq_kitt_1}.
\end{Remark}

Another weighted inequality that refines \eqref{ineq_kitt_3} can be stated as follows.
\begin{Theorem}\label{thm_prod}
	Let $A\in \mathscr{B}\left( \mathscr{H} \right)$. Then
	\begin{align*}
	\omega(A)\leq \frac{1}{2}\left(\|A\|+\sqrt{\|\;|A|^t|A^*|^t\|\cdot\|\;|A|^{1-t}|A^*|^{1-t}\|}\right), 0\leq t\leq 1.
	\end{align*}
\end{Theorem}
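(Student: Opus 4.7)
The plan is to reduce the statement to its midpoint case $t=\tfrac{1}{2}$ and then extend to general $t$ by interpolation via Lemma \ref{lem_log-conv}. Concretely, I would first establish the midpoint inequality
\[
\omega(A) \le \tfrac{1}{2}\bigl(\|A\| + \||A|^{1/2}|A^*|^{1/2}\|\bigr),
\]
and then show that its right-hand side is dominated by that of the claim for every $t\in[0,1]$.

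For the midpoint case, I would start from Yamazaki's inequality \eqref{ineq_yama_1}, namely $\omega(A) \le \tfrac{1}{2}(\|A\| + \omega(\widetilde{A}))$. Together with the trivial estimate $\omega(\widetilde A) \le \|\widetilde A\|$ from Lemma \ref{lemma_aluthge_1}, this reduces the task to proving $\|\widetilde A\| \le \||A|^{1/2}|A^*|^{1/2}\|$. Using the polar-decomposition identity $U|A|^{1/2} = |A^*|^{1/2}U$ (a consequence of $|A^*| = U|A|U^*$ via functional calculus, valid because $U^*U$ is the projection onto $\overline{\mathrm{ran}(|A|)}$ and $|A|^{1/2}$ takes values in that subspace), I would reorganize $\widetilde A = |A|^{1/2}U|A|^{1/2} = |A|^{1/2}|A^*|^{1/2}U$; since $\|U\| \le 1$, submultiplicativity yields the desired bound.

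For the extension to general $t$, I apply Lemma \ref{lem_log-conv} to the positive operators $P = |A|$ and $Q = |A^*|$: the map $f(s) = \|P^sQ^s\|$ is log-convex on $[0,1]$. Writing $\tfrac{1}{2} = \tfrac{1}{2}t + \tfrac{1}{2}(1-t)$, log-convexity at the midpoint of $t$ and $1-t$ gives $f(1/2)^2 \le f(t)\,f(1-t)$, that is,
\[
\||A|^{1/2}|A^*|^{1/2}\| \le \sqrt{\||A|^t|A^*|^t\|\cdot\||A|^{1-t}|A^*|^{1-t}\|}.
\]
Substituting this into the midpoint bound yields the claim. The only delicate point is the polar identity $U|A|^{1/2} = |A^*|^{1/2}U$, which is not spelled out among the earlier lemmas; once this standard fact is granted, the remaining steps are one-line norm estimates and direct appeals to Yamazaki's inequality and Lemma \ref{lem_log-conv}.
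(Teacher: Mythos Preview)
Your proof is correct but follows a genuinely different route from the paper's. The paper starts from Kittaneh's bound \eqref{ineq_kitt_1}, identifies $\|\,|A|+|A^*|\,\|$ with the spectral radius $r(|A|+|A^*|)$, splits $|A|+|A^*|=|A|^{t}|A|^{1-t}+|A^*|^{1-t}|A^*|^{t}$, and applies Lemma~\ref{lem_amer} directly; the general parameter $t$ is thus built in from the outset, and neither the Aluthge transform nor the log-convexity part of Lemma~\ref{lem_log-conv} is used. You instead prove the sharpest case $t=\tfrac12$ first, via Yamazaki's inequality \eqref{ineq_yama_1} together with the polar identity $U|A|^{1/2}=|A^*|^{1/2}U$ (which indeed holds because $|A^*|^{1/2}=U|A|^{1/2}U^*$ and $U^*U|A|^{1/2}=|A|^{1/2}$), and then recover every other $t$ from log-convexity. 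The trade-offs: the paper's argument avoids importing the polar-decomposition identity and yields the full $t$-family in one stroke, while yours sidesteps the somewhat specialized spectral-radius Lemma~\ref{lem_amer} and, as a bonus, immediately shows that the $t=\tfrac12$ bound dominates all others—exactly the content the paper only establishes afterwards in its Corollary.
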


\begin{proof}
	Since $|A|+|A^*|$ is a positive operator, it follows that $\|\;|A|+|A^*|\;\|=r(|A|+|A^*|).$ Therefore, Lemma \ref{lem_amer} implies
	\begin{align*}
	\omega(A)&\leq\frac{1}{2}\|\;|A|+|A^*|\;\|\\
	&=\frac{1}{2}r(|A|+|A^*|)\\
	&=\frac{1}{2}r\left(|A|^t|A|^{1-t}+|A^*|^{1-t}|A^*|^{t}\right)\\
	&\leq\frac{1}{4}\left(\omega(|A|)+\omega(|A^*|)+\sqrt{\left(\omega(|A|)-\omega(|A^*|)\right)^2+4\|\;|A|^t|A^*|^t\|\cdot\|\;|A|^{1-t}|A^*|^{1-t}\|}\right)\\
	&=\frac{1}{4}\left(\|\;|A|\;\|+\|\;|A^*|\;\|+2\sqrt{  \|\;|A|^t|A^*|^t|\cdot\|\;|A|^{1-t}|A^*|^{1-t}\| }\right)\\
	&=\frac{1}{2}\left(\|A\|+\sqrt{  \|\;|A|^t|A^*|^t\|\cdot\|\;|A|^{1-t}|A^*|^{1-t}\| }\right).
	\end{align*}
	This completes the proof.
\end{proof}

\begin{Remark}\label{rem_ned1}
	To see how Theorem \ref{thm_prod} refines \eqref{ineq_kitt_3}, we use Lemma \ref{lem_log-conv} to find that
	\begin{align*}
	\sqrt{  \|\;|A|^t|A^*|^t\|\cdot\|\;|A|^{1-t}|A^*|^{1-t}\| }&\leq \sqrt{\|\;|A|\;|A^*|\;\|^t\|\;|A|\;|A^*|\;\|^{1-t}}\\
	&=\sqrt{\|\;|A|\;|A^*|\|}\\
	&=\|A^2\|^{\frac{1}{2}}.
	\end{align*}
\end{Remark}

\begin{Corollary}
	Let $A\in \mathscr{B}\left( \mathscr{H} \right)$ and $0\leq t\leq 1$. Then
	\begin{align*}
	\omega(A)&\leq\frac{1}{2}\left(\|A\|+\left\|\;|A|^{\frac{1}{2}}|A^*|^{\frac{1}{2}}\right\|\right)\\
	&\leq \frac{1}{2}\left(\|A\|+\sqrt{ \|\;|A|^{t}|A^*|^{t}\|\cdot  \|\;|A|^{1-t}|A^*|^{1-t}\|      }\right)\\
	&\leq\frac{1}{2}\left(\|A\|+\|A^2\|^{\frac{1}{2}}\right).
	\end{align*}
\end{Corollary}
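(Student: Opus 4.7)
The plan is to chain the three claimed estimates in order, each being a short consequence of earlier material, so the corollary should amount to bookkeeping on top of Theorem \ref{thm_prod} and Lemma \ref{lem_log-conv}.

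For the first inequality, I would specialize Theorem \ref{thm_prod} at $t = 1/2$. The two factors under the square root then coincide, and the geometric mean collapses to $\|\;|A|^{1/2}|A^*|^{1/2}\|$, yielding exactly the claimed upper bound on $\omega(A)$.

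For the middle inequality, I would invoke the log-convexity portion of Lemma \ref{lem_log-conv}: the function $f(s) = \|\;|A|^s|A^*|^s\|$ is log-convex on $[0,1]$. Writing $1/2 = \tfrac{1}{2}t + \tfrac{1}{2}(1-t)$ and applying log-convexity at this midpoint yields $f(1/2)^2 \le f(t)\,f(1-t)$, i.e., $\|\;|A|^{1/2}|A^*|^{1/2}\| \le \sqrt{\|\;|A|^{t}|A^*|^{t}\|\cdot\|\;|A|^{1-t}|A^*|^{1-t}\|}$. Adding $\|A\|$ to both sides and dividing by $2$ produces the middle bound.

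For the third inequality, I would reuse the argument from Remark \ref{rem_ned1}. The first part of Lemma \ref{lem_log-conv} bounds each factor separately: $\|\;|A|^{t}|A^*|^{t}\|\le \|\;|A||A^*|\|^{t}$ and $\|\;|A|^{1-t}|A^*|^{1-t}\|\le \|\;|A||A^*|\|^{1-t}$, so their product is at most $\|\;|A||A^*|\|$ and its square root is $\sqrt{\|\;|A||A^*|\|}$. The only nonroutine step is the identification $\|\;|A||A^*|\| = \|A^2\|$, which I would obtain from the $C^*$-identity applied twice: with $B = A^*|A|$, one has $\|\;|A||A^*|\;\|^2 = \|\;|A|AA^*|A|\;\| = \|B^*B\| = \|B\|^2$, and $\|B\|^2 = \|BB^*\| = \|A^*|A|^2 A\| = \|(A^*)^2 A^2\| = \|A^2\|^2$. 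Since this identification is already implicit in Remark \ref{rem_ned1} and every other step reduces to a direct invocation of Theorem \ref{thm_prod} or Lemma \ref{lem_log-conv}, no substantial obstacle arises; the only step needing a moment of care is the $C^*$-manipulation above.
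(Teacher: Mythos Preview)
Your proposal is correct and follows essentially the same route as the paper: Theorem \ref{thm_prod} for the bound on $\omega(A)$, log-convexity from Lemma \ref{lem_log-conv} for the middle comparison, and Remark \ref{rem_ned1} for the last inequality. The only cosmetic difference is that the paper packages the middle step as ``$f(t)=\sqrt{\|\,|A|^{t}|A^*|^{t}\|\cdot\|\,|A|^{1-t}|A^*|^{1-t}\|}$ is log-convex and symmetric about $t=\tfrac12$, hence minimal there,'' whereas you apply midpoint log-convexity of $s\mapsto\|\,|A|^{s}|A^*|^{s}\|$ directly; the two arguments are equivalent.
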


\begin{proof}
	By Lemma \ref{lem_log-conv}, both 
	$$\|\;|A|^{t}|A^*|^{t}\|\;{\text{and}}\;\|\;|A|^{1-t}|A^*|^{1-t}\|$$ are log-convex functions in $t$, $0\leq t\leq 1$. Consequently, the function
	$$f(t)=\sqrt{ \|\;|A|^{t}|A^*|^{t}\|\cdot  \|\;|A|^{1-t}|A^*|^{1-t}\|      }$$ is log-convex, hence is convex. Since $f$ is convex and symmetric about $t=\frac{1}{2},$ it follows that $f$ attains its minimum at $\frac{1}{2}.$ Thus,
	$$f\left(\frac{1}{2}\right)\leq f(t), 0\leq t\leq 1.$$ This together with Theorem \ref{thm_prod} and Remark \ref{rem_ned1} imply the desired result.
\end{proof}

In the next result, we present a refinement of \eqref{ineq_int}.
\begin{Theorem}
	Let $A\in \mathscr{B}\left( \mathscr{H} \right)$. Then
	\[{{\omega }}\left( A \right)^{2}\le \left\| \int\limits_{0}^{1}{{{\left( \left( 1-t \right)\left| A \right|+t\left| {{A}^{*}} \right| \right)}^{2}}dt}-\frac{1}{48}{{\left( \left| A \right|-\left| {{A}^{*}} \right| \right)}^{2}} \right\|.\]
\end{Theorem}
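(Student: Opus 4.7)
The plan is to build on the pointwise estimate already contained in the chain \eqref{05} from the proof of Theorem \ref{thm_R}, namely
$$|\langle Ax, x\rangle|^2 \le \left\langle\left(\frac{|A|+|A^*|}{2}\right)^{2} x, x\right\rangle,$$
valid for every unit vector $x \in \mathscr{H}$, and to recast its right-hand side so that the integral appearing in \eqref{ineq_int} shows up explicitly.

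First I would establish the operator identity
$$\int_0^1 \left((1-t)|A|+t|A^*|\right)^2\,dt = \left(\frac{|A|+|A^*|}{2}\right)^{2} + \frac{1}{12}(|A|-|A^*|)^2.$$
This reduces to expanding the integrand, keeping track of the non-commutativity of $|A|$ and $|A^*|$, and integrating term by term using $\int_0^1(1-t)^2\,dt = \int_0^1 t^2\,dt = \frac{1}{3}$ together with $\int_0^1 t(1-t)\,dt = \frac{1}{6}$; subtracting $\left(\frac{|A|+|A^*|}{2}\right)^{2} = \frac{1}{4}\left(|A|^2+|A||A^*|+|A^*||A|+|A^*|^2\right)$ from the result and simplifying leaves exactly $\frac{1}{12}(|A|-|A^*|)^2$.

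Next, since $\frac{1}{12} = \frac{1}{48}+\frac{1}{16}$ and $(|A|-|A^*|)^2\ge 0$, the identity yields the operator inequality
$$\left(\frac{|A|+|A^*|}{2}\right)^{2} \;\le\; \int_0^1 \left((1-t)|A|+t|A^*|\right)^2\,dt - \frac{1}{48}(|A|-|A^*|)^2.$$
Substituting into the pointwise bound and taking the supremum over unit vectors $x \in \mathscr{H}$ gives the statement, since the operator on the right equals $\left(\frac{|A|+|A^*|}{2}\right)^{2} + \frac{1}{16}(|A|-|A^*|)^2$, which is positive, so its numerical-range supremum coincides with its operator norm.

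The argument has no serious obstacle; the only computation of substance is the integral identity, which is routine provided one respects the non-commutativity when expanding $\left((1-t)|A|+t|A^*|\right)^2$. I note in passing that the same argument in fact delivers the stronger constant $\frac{1}{12}$ in place of $\frac{1}{48}$, with the stated version obtained by dropping the positive term $\frac{1}{16}(|A|-|A^*|)^2$.
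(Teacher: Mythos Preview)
Your argument is correct. Both proofs start from the pointwise estimate \eqref{05} and then bound the operator $\bigl(\tfrac{|A|+|A^*|}{2}\bigr)^{2}$ from above by the integral expression, but the mechanisms differ. The paper writes $\tfrac{|A|+|A^*|}{2}$ as the average of $(1-t)|A|+t|A^*|$ and $(1-t)|A^*|+t|A|$, applies the refined operator convexity inequality \eqref{03} (quoted from \cite{sab_laa}), and only then integrates in $t$; the constant $\tfrac{1}{48}$ arises as $\int_{0}^{1}\tfrac{t(1-t)(1-2t)^{2}}{1+|1-2t|}\,dt$. Your route instead computes the integral itself as an \emph{identity}, needing nothing beyond $\int_{0}^{1}t^{2}\,dt=\tfrac{1}{3}$ and $\int_{0}^{1}t(1-t)\,dt=\tfrac{1}{6}$, and thereby bypasses the external lemma \eqref{03} entirely. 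The payoff, as you already observe, is that your method actually delivers the sharper constant $\tfrac{1}{12}$ in place of $\tfrac{1}{48}$; the paper's approach, on the other hand, showcases a further application of \eqref{03} in the spirit of Theorem \ref{thm_R}.
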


\begin{proof}
	Obviously,
	\[\begin{aligned}
	& {{\left( \frac{\left| A \right|+\left| {{A}^{*}} \right|}{2} \right)}^{2}} \\ 
	& ={{\left( \frac{\left( 1-t \right)\left| A \right|+t\left| {{A}^{*}} \right|+\left( 1-t \right)\left| {{A}^{*}} \right|+t\left| A \right|}{2} \right)}^{2}} \\ 
	& \le \frac{{{\left( \left( 1-t \right)\left| A \right|+t\left| {{A}^{*}} \right| \right)}^{2}}+{{\left( \left( 1-t \right)\left| {{A}^{*}} \right|+t\left| A \right| \right)}^{2}}}{2}-\frac{t\left( 1-t \right){{\left( 1-2t \right)}^{2}}}{2R}{{\left( \left| A \right|-\left| {{A}^{*}} \right| \right)}^{2}},
	\end{aligned}\]
	where we have used \eqref{03} to obtain the last inequality.
	Since 
	$R=\max \left\{ t,1-t \right\}=\frac{1-t+t+\left| 1-t-t \right|}{2}=\frac{1+\left| 1-2t \right|}{2},$
	we get
	\[\begin{aligned}
	& {{\left( \frac{\left| A \right|+\left| {{A}^{*}} \right|}{2} \right)}^{2}} \\ 
	& \le \frac{{{\left( \left( 1-t \right)\left| A \right|+t\left| {{A}^{*}} \right| \right)}^{2}}+{{\left( \left( 1-t \right)\left| {{A}^{*}} \right|+t\left| A \right| \right)}^{2}}}{2}-\frac{t\left( 1-t \right){{\left( 1-2t \right)}^{2}}}{1+\left| 1-2t \right|}{{\left( \left| A \right|-\left| {{A}^{*}} \right| \right)}^{2}}. \\ 
	\end{aligned}\]
	By taking integral over $0\le t\le 1$, and using the fact that
	\[\int\limits_{0}^{1}{{{\left( \left( 1-t \right)\left| A \right|+t\left| {{A}^{*}} \right| \right)}^{2}}dt}=\int\limits_{0}^{1}{{{\left( \left( 1-t \right)\left| {{A}^{*}} \right|+t\left| A \right| \right)}^{2}}dt},\]
	we can obtain
	\[{{\left( \frac{\left| A \right|+\left| {{A}^{*}} \right|}{2} \right)}^{2}}\le \int\limits_{0}^{1}{{{\left( \left( 1-t \right)\left| A \right|+t\left| {{A}^{*}} \right| \right)}^{2}}dt}-\frac{1}{48}{{\left( \left| A \right|-\left| {{A}^{*}} \right| \right)}^{2}}.\]
	By the same method used in the proof of inequality \eqref{05}, we have
	\[{{\left| \left\langle Ax,x \right\rangle  \right|}^{2}}\le \left\langle {{\left( \frac{\left| A \right|+\left| {{A}^{*}} \right|}{2} \right)}^{2}}x,x \right\rangle .\]
	Thus,
	\[{{\left| \left\langle Ax,x \right\rangle  \right|}^{2}}\le \left\langle \left( \int\limits_{0}^{1}{{{\left( \left( 1-t \right)\left| A \right|+t\left| {{A}^{*}} \right| \right)}^{2}}dt}-\frac{1}{48}{{\left( \left| A \right|-\left| {{A}^{*}} \right| \right)}^{2}} \right)x,x \right\rangle ,\]
	which implies
	\[{{\omega }}\left( A \right)^{2}\le \left\| \int\limits_{0}^{1}{{{\left( \left( 1-t \right)\left| A \right|+t\left| {{A}^{*}} \right| \right)}^{2}}dt}-\frac{1}{48}{{\left( \left| A \right|-\left| {{A}^{*}} \right| \right)}^{2}} \right\|,\]
	completing the proof.
\end{proof}

We conclude this subsection with the following weighted inequality, which implies \eqref{ineq_kitt_2} upon letting $t=\frac{1}{2}.$

\begin{Theorem}\label{thm_w}
	Let $A\in \mathscr{B}(\mathscr{H})$. Then 
	\[{{\omega }}\left( A \right)^{2}\le \underset{0\le t\le 1}{\mathop{\min }}\,\left\| \frac{{{\left| A \right|}^{4\left( 1-t \right)}}+{{\left| {{A}^{*}} \right|}^{4t}}}{4}+\frac{\left( 1-t \right){{\left| A \right|}^{2}}+t{{\left| {{A}^{*}} \right|}^{2}}}{2} \right\|.\]
\end{Theorem}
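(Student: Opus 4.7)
The plan is to work at the level of a fixed unit vector $x\in\mathscr{H}$ and combine two different upper bounds for $|\langle Ax,x\rangle|^2$, both of which start from Kato's inequality (Lemma \ref{11}) with $y=x$ and our parameter $t$:
\[|\langle Ax,x\rangle|^{2}\le \langle |A|^{2(1-t)}x,x\rangle\,\langle |A^{*}|^{2t}x,x\rangle.\]
Write $a=\langle |A|^{2(1-t)}x,x\rangle$ and $b=\langle |A^{*}|^{2t}x,x\rangle$. The two elementary bounds for $ab$ I will assemble are $ab\le \tfrac{a^{2}+b^{2}}{2}$ and $ab\le a^{1/(1-t)}(1-t)+\dots$; I spell these out next.

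\emph{Bound 1 (matches the fourth-power term).} From $ab\le\tfrac12(a^{2}+b^{2})$, apply Lemma \ref{12} with $r=2$ to the positive operators $|A|^{2(1-t)}$ and $|A^{*}|^{2t}$:
\[a^{2}\le \langle |A|^{4(1-t)}x,x\rangle,\qquad b^{2}\le \langle |A^{*}|^{4t}x,x\rangle,\]
so that $ab\le \tfrac{1}{2}\langle(|A|^{4(1-t)}+|A^{*}|^{4t})x,x\rangle$.

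\emph{Bound 2 (matches the second-power term).} Apply Lemma \ref{12} with $r=1-t\in[0,1]$ to $|A|^{2}$ and with $r=t\in[0,1]$ to $|A^{*}|^{2}$ to get $a\le \langle |A|^{2}x,x\rangle^{1-t}$ and $b\le\langle |A^{*}|^{2}x,x\rangle^{t}$. Then the weighted AM--GM inequality gives
\[ab\le \langle |A|^{2}x,x\rangle^{1-t}\langle |A^{*}|^{2}x,x\rangle^{t}\le (1-t)\langle |A|^{2}x,x\rangle+t\langle |A^{*}|^{2}x,x\rangle.\]

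\emph{Averaging and conclusion.} Since both bounds majorize $ab$, their arithmetic mean does too, yielding
\[|\langle Ax,x\rangle|^{2}\le ab\le \Big\langle\Big(\tfrac{|A|^{4(1-t)}+|A^{*}|^{4t}}{4}+\tfrac{(1-t)|A|^{2}+t|A^{*}|^{2}}{2}\Big)x,x\Big\rangle.\]
Bounding the right side by the operator norm of the bracketed positive operator, taking the supremum over unit vectors $x$, and finally the infimum over $t\in[0,1]$ yields the stated inequality. At $t=\tfrac12$ both summands collapse to $\tfrac14(|A|^{2}+|A^{*}|^{2})$, recovering \eqref{ineq_kitt_2}, which serves as a consistency check.

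There is no real obstacle here; the only non-mechanical idea is recognizing that one should \emph{average} the ``McCarthy with $r=2$'' bound and the ``McCarthy with $r\le 1$ plus weighted AM--GM'' bound rather than take just one of them, so that the $\tfrac14$ and $\tfrac12$ coefficients in the claimed estimate appear naturally.
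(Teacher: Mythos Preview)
Your proof is correct and uses essentially the same ingredients and approach as the paper: Kato's inequality, McCarthy with $r=2$ for the fourth-power piece, and McCarthy with $r\le 1$ followed by weighted AM--GM for the second-power piece. The only cosmetic difference is that the paper writes $ab\le\bigl(\tfrac{a+b}{2}\bigr)^{2}=\tfrac{a^{2}+b^{2}}{4}+\tfrac{ab}{2}$ and then bounds the two summands separately, whereas you obtain two upper bounds for $ab$ and average them; these are algebraically identical manoeuvres.
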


\begin{proof}
	For any unit vector $x\in\mathscr{H}$ and $0\leq t\leq 1$, we have
	\[\begin{aligned}
	{{\left| \left\langle Ax,x \right\rangle  \right|}^{2}}&\le \left\langle {{\left| A \right|}^{2\left( 1-t \right)}}x,x \right\rangle \left\langle {{\left| {{A}^{*}} \right|}^{2t}}x,x \right\rangle  \\ 
	& \le {{\left( \frac{\left\langle {{\left| A \right|}^{2\left( 1-t \right)}}x,x \right\rangle +\left\langle {{\left| {{A}^{*}} \right|}^{2t}}x,x \right\rangle }{2} \right)}^{2}} \\ 
	& =\frac{{{\left\langle {{\left| A \right|}^{2\left( 1-t \right)}}x,x \right\rangle }^{2}}+{{\left\langle {{\left| {{A}^{*}} \right|}^{2t}}x,x \right\rangle }^{2}}+2\left\langle {{\left| A \right|}^{2\left( 1-t \right)}}x,x \right\rangle \left\langle {{\left| {{A}^{*}} \right|}^{2t}}x,x \right\rangle }{4} \\ 
	& \le \frac{\left\langle {{\left| A \right|}^{4\left( 1-t \right)}}x,x \right\rangle +\left\langle {{\left| {{A}^{*}} \right|}^{4t}}x,x \right\rangle +2\left\langle {{\left| A \right|}^{2\left( 1-t \right)}}x,x \right\rangle \left\langle {{\left| {{A}^{*}} \right|}^{2t}}x,x \right\rangle }{4} \\ 
	& \le \frac{\left\langle {{\left| A \right|}^{4\left( 1-t \right)}}x,x \right\rangle +\left\langle {{\left| {{A}^{*}} \right|}^{4t}}x,x \right\rangle +2{{\left\langle {{\left| A \right|}^{2}}x,x \right\rangle }^{1-t}}{{\left\langle {{\left| {{A}^{*}} \right|}^{2}}x,x \right\rangle }^{t}}}{4} \\ 
	& \le \frac{\left\langle {{\left| A \right|}^{4\left( 1-t \right)}}x,x \right\rangle +\left\langle {{\left| {{A}^{*}} \right|}^{4t}}x,x \right\rangle +2\left( \left( 1-t \right)\left\langle {{\left| A \right|}^{2}}x,x \right\rangle +t\left\langle {{\left| {{A}^{*}} \right|}^{2}}x,x \right\rangle  \right)}{4} \\ 
	& =\left\langle \left( \frac{{{\left| A \right|}^{4\left( 1-t \right)}}+{{\left| {{A}^{*}} \right|}^{4t}}}{4}+\frac{\left( 1-t \right){{\left| A \right|}^{2}}+t{{\left| {{A}^{*}} \right|}^{2}}}{2} \right)x,x \right\rangle  .
	\end{aligned}\]
	Taking the supremum over unit vectors $x$ implies 
	\[{{\omega }}\left( A \right)^{2}\le \left\| \frac{{{\left| A \right|}^{4\left( 1-t \right)}}+{{\left| {{A}^{*}} \right|}^{4t}}}{4}+\frac{\left( 1-t \right){{\left| A \right|}^{2}}+t{{\left| {{A}^{*}} \right|}^{2}}}{2} \right\|.\]
	This completes the proof.
\end{proof}

%---------------------------------------------------------------------------------------------------------------

\section{Further results}

As an important tool to obtain numerical radius inequalities, we present the following inequality for the inner product of Schwarz type. First, notice  Lemmas \ref{11} and \ref{12} imply
\[\begin{aligned}
{{\left| \left\langle Ax,x \right\rangle  \right|}^{2}}&\le \left\langle \left| A \right|x,x \right\rangle \left\langle \left| {{A}^{*}} \right|x,x \right\rangle  \\ 
& =\sqrt{{{\left\langle \left| A \right|x,x \right\rangle }^{2}}{{\left\langle \left| {{A}^{*}} \right|x,x \right\rangle }^{2}}} \\ 
& \le \sqrt{\left\langle {{\left| A \right|}^{2}}x,x \right\rangle \left\langle {{\left| {{A}^{*}} \right|}^{2}}x,x \right\rangle.}  
\end{aligned}\]
So,
\[{{\left| \left\langle Ax,x \right\rangle  \right|}^{2}}\le \sqrt{\left\langle {{\left| A \right|}^{2}}x,x \right\rangle \left\langle {{\left| {{A}^{*}} \right|}^{2}}x,x \right\rangle },\]
for any $A\in\mathscr{B}(\mathscr{H})$ and $x\in\mathscr{H}.$
In the next result, we improve the last inequality, to obtain a form that enables us to find a new weighted numerical radius inequality.
\begin{Theorem}
	Let $A,B\in \mathscr{B}(\mathscr{H})$. Then for any vector $x\in \mathscr{H}$,
\[\left| \left\langle {{B}^{*}}Ax,x \right\rangle -\left\langle {{B}^{*}}x,x \right\rangle \left\langle Ax,x \right\rangle  \right|\le \sqrt{\left\langle {{\left| A \right|}^{2}}x,x \right\rangle \left\langle {{\left| B \right|}^{2}}x,x \right\rangle }-\left| \left\langle Ax,x \right\rangle  \right|\left| \left\langle Bx,x \right\rangle  \right|.\]	
	In particular,
	\begin{equation}\label{020}
{{\left| \left\langle Ax,x \right\rangle  \right|}^{2}}+\left| \left\langle {{A}^{2}}x,x \right\rangle -{{\left\langle Ax,x \right\rangle }^{2}} \right|\le \sqrt{\left\langle {{\left| A \right|}^{2}}x,x \right\rangle \left\langle {{\left| {{A}^{*}} \right|}^{2}}x,x \right\rangle }.	
	\end{equation}
\end{Theorem}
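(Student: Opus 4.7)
The plan is to decompose $Ax$ and $Bx$ along the line spanned by $x$ and its orthogonal complement, apply the Cauchy--Schwarz inequality to the components orthogonal to $x$, and then reduce the remaining step to an elementary scalar inequality. I will take $\|x\|=1$ throughout (the statement is meaningful only in this normalization, since the two terms on each side do not have matching homogeneity in $x$).

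First, set $\alpha=\langle Ax,x\rangle$, $\beta=\langle Bx,x\rangle$, and write $Ax=\alpha x+y$, $Bx=\beta x+z$ with $y,z\perp x$. Using $\langle B^{*}Ax,x\rangle=\langle Ax,Bx\rangle$ together with $\langle B^{*}x,x\rangle=\overline{\beta}$, a direct expansion collapses to the identity
\[
\langle B^{*}Ax,x\rangle-\langle B^{*}x,x\rangle\langle Ax,x\rangle=\langle y,z\rangle,
\]
so that Cauchy--Schwarz delivers $\bigl|\langle B^{*}Ax,x\rangle-\langle B^{*}x,x\rangle\langle Ax,x\rangle\bigr|\le\|y\|\,\|z\|$. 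The Pythagorean theorem gives $\|y\|^{2}=\langle|A|^{2}x,x\rangle-|\alpha|^{2}$ and $\|z\|^{2}=\langle|B|^{2}x,x\rangle-|\beta|^{2}$.

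What remains is the elementary inequality
\[
\sqrt{(a-p^{2})(b-q^{2})}\le\sqrt{ab}-pq,\qquad a\ge p^{2}\ge0,\ \ b\ge q^{2}\ge0,
\]
applied with $a=\langle|A|^{2}x,x\rangle$, $b=\langle|B|^{2}x,x\rangle$, $p=|\alpha|$, $q=|\beta|$. Both sides are non-negative (the right-hand side because $\sqrt{ab}\ge pq$ under the hypotheses), so squaring and cancelling the common term $ab+p^{2}q^{2}$ reduces the claim to $aq^{2}+bp^{2}\ge 2pq\sqrt{ab}$, which is just the AM--GM inequality. Combining this with the Cauchy--Schwarz bound above finishes the first inequality.

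For the particular case \eqref{020}, I would specialize $B=A^{*}$. Then $B^{*}A=A^{2}$, $|B|^{2}=AA^{*}=|A^{*}|^{2}$, $\langle B^{*}x,x\rangle=\langle Ax,x\rangle$, and $|\langle Bx,x\rangle|=|\langle A^{*}x,x\rangle|=|\langle Ax,x\rangle|$; substituting into the first inequality and moving $|\langle Ax,x\rangle|^{2}$ to the left-hand side yields \eqref{020} directly. The main obstacle is isolating and verifying the scalar inequality $\sqrt{(a-p^{2})(b-q^{2})}\le\sqrt{ab}-pq$; this is precisely the step that refines the weaker bound $|\langle Ax,x\rangle|^{2}\le\sqrt{\langle|A|^{2}x,x\rangle\langle|A^{*}|^{2}x,x\rangle}$ derived just before the theorem, and everything else is a routine orthogonal decomposition plus Cauchy--Schwarz.
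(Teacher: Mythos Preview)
Your proof is correct and follows essentially the same route as the paper: the paper writes $\langle B^{*}Ax,x\rangle-\langle B^{*}x,x\rangle\langle Ax,x\rangle=\langle (A-\langle Ax,x\rangle)x,(B-\langle Bx,x\rangle)x\rangle$ (which is exactly your $\langle y,z\rangle$ after the orthogonal decomposition), applies Cauchy--Schwarz, and then uses the scalar inequality $(a^{2}-b^{2})(c^{2}-d^{2})\le(ac-bd)^{2}$, i.e.\ your $\sqrt{(a-p^{2})(b-q^{2})}\le\sqrt{ab}-pq$ in different notation; the specialization $B=A^{*}$ for \eqref{020} is also the same. Your explicit observation that $\|x\|=1$ is required (on homogeneity grounds) is a useful clarification that the paper leaves implicit.
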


\begin{proof}
	First of all, notice that
	\begin{equation}\label{19}
	\left\| \left( A-\left\langle Ax,x \right\rangle  \right)x \right\|=\sqrt{\left\langle {{\left| A \right|}^{2}}x,x \right\rangle -{{\left| \left\langle Ax,x \right\rangle  \right|}^{2}}}.
	\end{equation}
	Replacing $A$ by $B$ in \eqref{19}, we get
	\[\left\| \left( {B}-\left\langle {B}x,x \right\rangle  \right)x \right\|=\sqrt{\left\langle {{\left| {B} \right|}^{2}}x,x \right\rangle -{{\left| \left\langle Bx,x \right\rangle  \right|}^{2}}}.\]
	On the other hand,	by the Schwarz inequality,
\[\begin{aligned}
   \left| \left\langle {{B}^{*}}Ax,x \right\rangle -\left\langle {{B}^{*}}x,x \right\rangle \left\langle Ax,x \right\rangle  \right|&=\left| \left\langle \left( {{B}^{*}}-\left\langle {{B}^{*}}x,x \right\rangle  \right)\left( A-\left\langle Ax,x \right\rangle  \right)x,x \right\rangle  \right| \\ 
 & =\left| \left\langle A-\left\langle Ax,x \right\rangle x,B-\left\langle Bx,x \right\rangle x \right\rangle  \right| \\ 
 & \le \left\| A-\left\langle Ax,x \right\rangle x \right\|\left\| B-\left\langle Bx,x \right\rangle x \right\|.  
\end{aligned}\]
We can conclude from the discussion above that
\[\begin{aligned}
   \left| \left\langle {{B}^{*}}Ax,x \right\rangle -\left\langle {{B}^{*}}x,x \right\rangle \left\langle Ax,x \right\rangle  \right|&\le \sqrt{\left\langle {{\left| A \right|}^{2}}x,x \right\rangle -{{\left| \left\langle Ax,x \right\rangle  \right|}^{2}}}\sqrt{\left\langle {{\left| B \right|}^{2}}x,x \right\rangle -{{\left| \left\langle Bx,x \right\rangle  \right|}^{2}}} \\ 
 &\le \sqrt{\left\langle {{\left| A \right|}^{2}}x,x \right\rangle \left\langle {{\left| B \right|}^{2}}x,x \right\rangle }-\left| \left\langle Ax,x \right\rangle  \right|\left| \left\langle Bx,x \right\rangle  \right|  
\end{aligned}\]
where the last inequality follows from the simple inequality $\left( {{a}^{2}}-{{b}^{2}} \right)\left( {{c}^{2}}-{{d}^{2}} \right)\le {{\left( ac-bd \right)}^{2}}$ for $a,b,c,d\in \mathbb{R}^+$. 
	Consequently,
	\begin{equation}\label{20}
\left| \left\langle {{B}^{*}}Ax,x \right\rangle -\left\langle {{B}^{*}}x,x \right\rangle \left\langle Ax,x \right\rangle  \right|\le \sqrt{\left\langle {{\left| A \right|}^{2}}x,x \right\rangle \left\langle {{\left| B \right|}^{2}}x,x \right\rangle }-\left| \left\langle Ax,x \right\rangle  \right|\left| \left\langle Bx,x \right\rangle  \right|
	\end{equation}
	which implies the desired inequality.\\
	 Putting ${{B}^{*}}=A$ in \eqref{20}, we obtain the inequality \eqref{020}.
\end{proof}

\begin{Corollary}\label{cor_CS_ref}
	Let $A,B\in \mathscr{B}(\mathscr{H})$. Then for any  vector $x\in \mathscr{H}$,
\[\left| \left\langle {{B}^{*}}x,x \right\rangle \left\langle Ax,x \right\rangle  \right|\le \frac{\sqrt{\left\langle {{\left| A \right|}^{2}}x,x \right\rangle \left\langle {{\left| B \right|}^{2}}x,x \right\rangle }+\left| \left\langle {{B}^{*}}Ax,x \right\rangle  \right|}{2}.\]
\end{Corollary}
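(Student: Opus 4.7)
The plan is to deduce this corollary directly from inequality \eqref{20} in the preceding theorem, by invoking the reverse triangle inequality on the left-hand side and then rearranging.

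First I would apply the reverse triangle inequality
\[
\bigl|\langle B^{*}x,x\rangle\langle Ax,x\rangle\bigr|-\bigl|\langle B^{*}Ax,x\rangle\bigr|
\;\le\;
\bigl|\langle B^{*}Ax,x\rangle-\langle B^{*}x,x\rangle\langle Ax,x\rangle\bigr|,
\]
and then bound the right-hand side by \eqref{20} to obtain
\[
\bigl|\langle B^{*}x,x\rangle\langle Ax,x\rangle\bigr|-\bigl|\langle B^{*}Ax,x\rangle\bigr|
\;\le\;
\sqrt{\langle |A|^{2}x,x\rangle\langle |B|^{2}x,x\rangle}
-\bigl|\langle Ax,x\rangle\bigr|\bigl|\langle Bx,x\rangle\bigr|.
\]

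Next I would use the identity $\langle B^{*}x,x\rangle=\overline{\langle Bx,x\rangle}$, which gives $|\langle B^{*}x,x\rangle|=|\langle Bx,x\rangle|$, and hence
\[
\bigl|\langle B^{*}x,x\rangle\langle Ax,x\rangle\bigr|=\bigl|\langle Ax,x\rangle\bigr|\bigl|\langle Bx,x\rangle\bigr|.
\]
Substituting this equality into the previous bound produces
\[
2\bigl|\langle B^{*}x,x\rangle\langle Ax,x\rangle\bigr|
\;\le\;
\sqrt{\langle |A|^{2}x,x\rangle\langle |B|^{2}x,x\rangle}
+\bigl|\langle B^{*}Ax,x\rangle\bigr|,
\]
and dividing by $2$ yields exactly the desired inequality.

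There is no genuine obstacle: the work is already in the theorem, and the corollary is a one-line consequence once one notices that the quantity $|\langle Ax,x\rangle||\langle Bx,x\rangle|$ on the right of \eqref{20} equals the quantity $|\langle B^{*}x,x\rangle\langle Ax,x\rangle|$ that appears on the left of the corollary. The only mild point worth flagging is the use of the reverse triangle inequality in the correct direction, together with the conjugation identity for $\langle B^{*}x,x\rangle$, which together produce the factor of $\tfrac{1}{2}$ in the final bound.
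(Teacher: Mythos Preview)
Your proof is correct and follows essentially the same route as the paper: apply the reverse triangle inequality to the left side of \eqref{20}, use $|\langle B^{*}x,x\rangle|=|\langle Bx,x\rangle|$ to match the subtracted term on the right, and rearrange to obtain the factor $\tfrac12$. The paper's argument is identical in substance, merely phrasing the first step as ``the triangle inequality for the modulus.''
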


\begin{proof}
	Using the triangle inequality for the modulus,
	\begin{equation}\label{21}
\begin{aligned}
   \left| \left\langle Bx,x \right\rangle  \right|\left| \left\langle Ax,x \right\rangle  \right|-\left| \left\langle {{B}^{*}}Ax,x \right\rangle  \right|&=\left| \left\langle {{B}^{*}}x,x \right\rangle \left\langle Ax,x \right\rangle  \right|-\left| \left\langle {{B}^{*}}Ax,x \right\rangle  \right| \\ 
 & \le \left| \left\langle {{B}^{*}}Ax,x \right\rangle -\left\langle {{B}^{*}}x,x \right\rangle \left\langle Ax,x \right\rangle  \right|.  
\end{aligned}
	\end{equation}
	Combining the inequalities \eqref{20} and \eqref{21}, we get
\[\left| \left\langle {{B}^{*}}x,x \right\rangle \left\langle Ax,x \right\rangle  \right|\le \frac{\sqrt{\left\langle {{\left| A \right|}^{2}}x,x \right\rangle \left\langle {{\left| B \right|}^{2}}x,x \right\rangle }+\left| \left\langle {{B}^{*}}Ax,x \right\rangle  \right|}{2}.\]
	This completes the proof.
\end{proof}

\begin{Corollary}
	Let $A\in \mathscr{B}(\mathscr{H})$. Then
	\[\omega(A)^2\leq \frac{1}{2}\min_{0\leq t\leq 1}\left(\left\|t|A|^{\frac{2}{t}}+(1-t)|A^*|^{\frac{2}{1-t}}\right\|^{\frac{1}{2}}+\omega(A^2)\right).\] 
\end{Corollary}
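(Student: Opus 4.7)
The plan is to derive the bound from Corollary~\ref{cor_CS_ref} by picking $B$ so that the right-hand side already contains $\omega(A^2)$, and then to linearize the remaining geometric mean of inner products via McCarthy's inequality together with the weighted arithmetic-geometric mean inequality.

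First I would specialize Corollary~\ref{cor_CS_ref} to $B = A^*$, for which $B^* = A$, $|B|^2 = AA^* = |A^*|^2$ and $B^*A = A^2$. This collapses the corollary to
\[|\langle Ax,x\rangle|^2 \le \tfrac{1}{2}\bigl(\sqrt{\langle |A|^2 x,x\rangle \langle |A^*|^2 x,x\rangle} + |\langle A^2 x,x\rangle|\bigr),\]
which is exactly the pointwise version of the desired inequality, with the $\omega(A^2)$ summand already in place.

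Next, for each $t \in (0,1)$ I would write $|A|^2 = (|A|^{2/t})^t$ and $|A^*|^2 = (|A^*|^{2/(1-t)})^{1-t}$ and apply the $0 \le r \le 1$ half of Lemma~\ref{12} to each factor, giving $\langle |A|^2 x,x\rangle \le \langle |A|^{2/t}x,x\rangle^t$ and $\langle |A^*|^2 x,x\rangle \le \langle |A^*|^{2/(1-t)}x,x\rangle^{1-t}$. Multiplying the two bounds and invoking Young's inequality $a^t b^{1-t} \le ta + (1-t)b$ then fuses the geometric mean under the square root into a single expectation of the positive operator $t|A|^{2/t} + (1-t)|A^*|^{2/(1-t)}$. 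Taking the supremum over unit vectors and using the standard identities $\sup_{\|x\|=1}\langle Px,x\rangle = \|P\|$ for positive $P$ and $\omega(A^2) = \sup_{\|x\|=1}|\langle A^2 x,x\rangle|$ gives the inequality for each fixed $t$; infimizing over $t \in [0,1]$ (with the endpoints understood by continuity) finishes the proof.

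The only subtle point is the direction of McCarthy's inequality: to land in the regime $0 \le r \le 1$ one must represent $|A|^2$ as $(|A|^{2/t})^t$ rather than the reverse, so that the power sits \emph{outside} the inner product, after which Young's inequality lifts the result back to a linear combination inside the inner product. Once this exponent bookkeeping is set up, the supremum and minimization steps are mechanical, and the chain of inequalities assembles into the stated bound.
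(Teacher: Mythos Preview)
Your proposal is correct and follows essentially the same route as the paper: specialize Corollary~\ref{cor_CS_ref} to $B=A^*$ to obtain the pointwise bound with the $|\langle A^2x,x\rangle|$ term, then apply the $0\le r\le 1$ case of Lemma~\ref{12} in the form $\langle (|A|^{2/t})^t x,x\rangle\le\langle |A|^{2/t}x,x\rangle^t$ (and similarly for $|A^*|$), followed by the weighted arithmetic--geometric mean inequality and a supremum over unit vectors. The only cosmetic difference is that the paper writes the intermediate step as $\langle |A|^{2t/t}x,x\rangle$ before invoking McCarthy, whereas you state the exponent bookkeeping more explicitly.
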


\begin{proof}
	From Corollary \ref{cor_CS_ref}, we have
	\[{{\left| \left\langle Ax,x \right\rangle  \right|}^{2}}\le \frac{\sqrt{\left\langle {{\left| A \right|}^{2}}x,x \right\rangle \left\langle {{\left| {{A}^{*}} \right|}^{2}}x,x \right\rangle }+\left| \left\langle {{A}^{2}}x,x \right\rangle  \right|}{2},\] 
	for any unit vector $x$. Then for $0\leq t\leq 1,$
	\begin{align*}
	\left|\left<Ax,x\right>\right|^2&\leq \frac{\sqrt{\left<|A|^{\frac{2t}{t}}x,x\right>\left<|A^*|^{\frac{2(1-t)}{1-t}}x,x\right>}+|\left<A^2x,x\right>|}{2}\\
	&\leq \frac{\sqrt{\left<|A|^{\frac{2}{t}}x,x\right>^t\left<|A^*|^{\frac{2}{1-t}}x,x\right>^{1-t}}+|\left<A^2x,x\right>|}{2}\\
	&\leq\frac{\sqrt{\left<\left(t|A|^{\frac{2}{t}}+(1-t)|A^*|^{\frac{2}{1-t}}\right)x,x\right>}+|\left<A^2x,x\right>|}{2}.
	\end{align*}
	Taking the supremum over all unit vectors $x$ implies the desired inequality.
\end{proof}

\section{Examples}
In this section, we present different examples to show that the obtained results provide non-trivial refinements of the well known results; such as \eqref{ineq_kitt_1}, \eqref{ineq_kitt_2} and \eqref{ineq_kitt_3}.

\begin{example}
In this example, we investigate the inequality
\[{{\omega }}\left( A \right)^{2}\le \underset{0\le t\le 1}{\mathop{\min }}\,\left\| \left( 1-t \right){{\left| A \right|}^{\frac{1}{1-t}}}+t{{\left| {{A}^{*}} \right|}^{\frac{1}{t}}} \right\|\] obtained in Theorem \ref{thm_1}. 

Let $A=\left[ \begin{matrix}
   0 & 2 & 0  \\
   0 & 0 & 3  \\
   4 & 0 & 0  \\
\end{matrix} \right]$. Then $\left| {{A}^{*}} \right|=\left[ \begin{matrix}
   2 & 0 & 0  \\
   0 & 3 & 0  \\
   0 & 0 & 4  \\
\end{matrix} \right]$  and $\left| A \right|=\left[ \begin{matrix}
   4 & 0 & 0  \\
   0 & 2 & 0  \\
   0 & 0 & 3  \\
\end{matrix} \right]$.
Thus,
	\[\begin{aligned}
  & \underset{0\le t\le 1}{\mathop{\min }}\,\left\| \left( 1-t \right){{\left| A \right|}^{\frac{1}{1-t}}}+t{{\left| {{A}^{*}} \right|}^{\frac{1}{t}}} \right\|\\
  &=\underset{0\le t\le 1}{\mathop{\min }}\,\left\| \left[ \begin{matrix}
   \left( 1-t \right){{4}^{\frac{1}{1-t}}}+t{{2}^{\frac{1}{t}}} & 0 & 0  \\
   0 & \left( 1-t \right){{2}^{\frac{1}{1-t}}}+t{{3}^{\frac{1}{t}}} & 0  \\
   0 & 0 & \left( 1-t \right){{3}^{\frac{1}{1-t}}}+t{{4}^{\frac{1}{t}}}  \\
\end{matrix} \right] \right\| \\ 
 & =\underset{0\le t\le 1}{\mathop{\min }}\,\max \left\{ \left( 1-t \right){{4}^{\frac{1}{1-t}}}+t{{2}^{\frac{1}{t}}},\left( 1-t \right){{2}^{\frac{1}{1-t}}}+t{{3}^{\frac{1}{t}}},\left( 1-t \right){{3}^{\frac{1}{1-t}}}+t{{4}^{\frac{1}{t}}} \right\} \\ 
 & =\underset{0\le t\le 1}{\mathop{\min }}\,\left\{ \begin{aligned}
  & \left( 1-t \right){{3}^{\frac{1}{1-t}}}+t{{4}^{\frac{1}{t}}}\text{ when }0<t\le 0.5557 \\ 
 & \left( 1-t \right){{4}^{\frac{1}{1-t}}}+t{{2}^{\frac{1}{t}}}\text{ when }0.5558\le t<1 \\ 
\end{aligned} \right.\approx 12.002.
\end{aligned}\]

On the other hand,
	\[\frac{1}{2}\left\| {{\left| A \right|}^{2}}+{{\left| {{A}^{*}} \right|}^{2}} \right\|=12.5.\]
Consequently, in this case
	\[\underset{0\le t\le 1}{\mathop{\min }}\,\left\| \left( 1-t \right){{\left| A \right|}^{\frac{1}{1-t}}}+t{{\left| {{A}^{*}} \right|}^{\frac{1}{t}}} \right\|<\frac{1}{2}\left\| {{\left| A \right|}^{2}}+{{\left| {{A}^{*}} \right|}^{2}} \right\|.\]
	Thus, this example shows that Theorem \ref{thm_1} provides a non-trivial refinement of \eqref{ineq_kitt_2}.
	
In fact, this example shows also that Theorem \ref{thm_1} provides a non-trivial refinement of \eqref{ineq_kitt_1}, because Theorem \ref{thm_1} implies

\begin{align*}
\omega(A)\leq \underset{0\le t\le 1}{\mathop{\min }}\,\left\| \left( 1-t \right){{\left| A \right|}^{\frac{1}{1-t}}}+t{{\left| {{A}^{*}} \right|}^{\frac{1}{t}}} \right\|^{\frac{1}{2}}&\approx\sqrt{12.002}
<3.5
=\frac{1}{2}\left\|\;|A|+|A^*|\;\right\|.
\end{align*}
Indeed, the inequality \eqref{ineq_kitt_1} is better than \eqref{ineq_kitt_2} and \eqref{ineq_kitt_3}. Thus, showing that our result is better than \eqref{ineq_kitt_1} implies that it is better than both \eqref{ineq_kitt_2} and \eqref{ineq_kitt_3}.
\end{example}

\begin{example}
In this example, we show that the inequality
\[{{\omega }}\left( A \right)^{2}\le \underset{0\le t\le 1}{\mathop{\min }}\,\left\| \frac{{{\left| A \right|}^{4\left( 1-t \right)}}+{{\left| {{A}^{*}} \right|}^{4t}}}{4}+\frac{\left( 1-t \right){{\left| A \right|}^{2}}+t{{\left| {{A}^{*}} \right|}^{2}}}{2} \right\|\] obtained in Theorem \ref{thm_w} provides a non-trivial refinement of the three inequalities \eqref{ineq_kitt_1}, \eqref{ineq_kitt_2} and \eqref{ineq_kitt_3}. To do so, it suffices to show that it is better than \eqref{ineq_kitt_1}.

Let $A=\left[ \begin{matrix}
   0 & 3 & 0  \\
   0 & 0 & 4  \\
   2 & 0 & 0  \\
\end{matrix} \right].$ Then $\left| {{A}^{*}} \right|=\left[ \begin{matrix}
   3 & 0 & 0  \\
   0 & 4 & 0  \\
   0 & 0 & 2  \\
\end{matrix} \right]  \;{\text{and}}\; \left| A \right|=\left[ \begin{matrix}
   2 & 0 & 0  \\
   0 & 3 & 0  \\
   0 & 0 & 4  \\
\end{matrix} \right].$
Consequently, 
$$\begin{aligned}
  & \underset{0\le t\le 1}{\mathop{\min }}\,\left\| \frac{\left( 1-t \right){{\left| A \right|}^{2}}+t{{\left| {{A}^{*}} \right|}^{2}}}{2}+\frac{{{\left| A \right|}^{4\left( 1-t \right)}}+{{\left| {{A}^{*}} \right|}^{4t}}}{4} \right\| \\ 
 & =\underset{0\le t\le 1}{\mathop{\min }}\,\left\| \left[ \begin{matrix}
   \frac{{{\left( 16 \right)}^{1-t}}+{{\left( 81 \right)}^{t}}}{4}+\frac{4+5t}{2} & 0 & 0  \\
   0 & \frac{{{\left( 81 \right)}^{1-t}}+{{\left( 256 \right)}^{t}}}{4}+\frac{9-7t}{2} & 0  \\
   0 & 0 & \frac{{{\left( 256 \right)}^{1-t}}+{{\left( 16 \right)}^{t}}}{4}+\frac{16-12t}{2}  \\
\end{matrix} \right] \right\| \\ 
 & =\underset{0\le t\le 1}{\mathop{\min }}\,\max \left\{ \frac{{{\left( 16 \right)}^{1-t}}+{{\left( 81 \right)}^{t}}}{4}+\frac{4+5t}{2},\frac{{{\left( 81 \right)}^{1-t}}+{{\left( 256 \right)}^{t}}}{4}+\frac{9-7t}{2},\frac{{{\left( 256 \right)}^{1-t}}+{{\left( 16 \right)}^{t}}}{4}+\frac{16-12t}{2} \right\} \\ 
 & =\underset{0\le t\le 1}{\mathop{\min }}\,\left\{ \begin{aligned}
  & \frac{{{\left( 256 \right)}^{1-t}}+{{\left( 16 \right)}^{t}}}{4}+\frac{16-12t}{2}\text{ for }0\le t\le 0.5286 \\ 
 & \frac{{{\left( 81 \right)}^{1-t}}+{{\left( 256 \right)}^{t}}}{4}+\frac{9-7t}{2}\text{ for }0.5287\le t\le 1 \\ 
\end{aligned} \right.\approx 9.32. \\ 
\end{aligned}$$
On the other hand,
	$$\frac{1}{2}\left\| {{\left| A \right|}^{}}+{{\left| {{A}^{*}} \right|}^{}} \right\|=3.5.$$
So, we have shown that, in this example,
	\[\underset{0\le t\le 1}{\mathop{\min }}\,\left\| \frac{\left( 1-t \right){{\left| A \right|}^{2}}+t{{\left| {{A}^{*}} \right|}^{2}}}{2}+\frac{{{\left| A \right|}^{4\left( 1-t \right)}}+{{\left| {{A}^{*}} \right|}^{4t}}}{4} \right\|^{\frac{1}{2}}<\frac{1}{2}\left\| {{\left| A \right|}^{}}+{{\left| {{A}^{*}} \right|}^{}} \right\|.\]
\end{example}

{\tiny $^{1}$Department of Mathematics, Islamic Azad University, Mashhad Branch, Mashhad, Iran}

{\tiny \textit{E-mail address:} shiva.sheybani95@gmail.com}

{\tiny $^{2}$Department of basic sciences, Princess Sumaya University for Technology, Amman 11941, Jordan}

{\tiny \textit{E-mail address:} sababheh@psut.edu.jo}

{\tiny $^{3}$Department of Mathematics, Payame Noor University (PNU), P.O. Box 19395-4697, Tehran, Iran}

{\tiny \textit{E-mail address:} hrmoradi@mshdiau.ac.ir}

%-----------------------------------------------------------------------------
%-----------------------------------------------------------------------------

\begin{thebibliography}{9}
\bibitem{ak}
A. Abu Omar and F. Kittaneh, {\it A numerical radius inequality involving the generalized Aluthge transform}, Studia Math., {\bf216}(1) (2013), 69--75.

 \bibitem{alu}
 A. Aluthge, {\it Some generalized theo rems on $p$-hyponormal operators}, Integral Equations Operator Theory., {\bf24} (1996), 497--501.

\bibitem{bhatia}
 R. Bhatia and C. Davis, {\it A Cauchy-Schwarz inequality for operators with applications}, Linear Algebra Appl., {\bf223} (1995), 119--129.



 \bibitem{halmos}
 P. R. Halmos, {\it A Hilbert space problem book}, 2ed, Grad.Texts Math. 19, Springer, New York 1982.
 
 \bibitem{kato}
T. Kato, {\it Notes on some inequalities for linear operators}, Math. Ann., {\bf125} (1952) 208--212.

 \bibitem{7}
 F. Kittaneh, {\it A numerical radius inequality and an estimate for the numerical radius of the Frobenius companion matrix}, Studia Math., {\bf158} (2003), 11--17.
 
\bibitem{07}
F. Kittaneh, {\it Numerical radius inequalities for Hilbert space operators}, Studia Math., {\bf168}(1) (2005), 73--80.



\bibitem{10}
F. Kittaneh and H. R. Moradi, {\it Cauchy-Schwarz type inequalities and applications to numerical radius inequalities}, Math. Inequal. Appl., {\bf23}(3) (2020), 1117--1125.

 \bibitem{mcarthy}
 C. A. McCarthy, $c_p$, Israel J. Math., {\bf 5} (1967), 249--271.

 
 \bibitem{moradi}
 H. R. Moradi and M. Sababheh, {\it More accurate numerical radius inequalities (II)}, Linear Multilinear Algebra. https://doi.org/10.1080/03081087.2019.1703886
 
 \bibitem{8}
 M. E. Omidvar and H. R. Moradi, {\it Better bounds on the numerical radii of Hilbert space operators}, Linear Algebra Appl., {\bf604} (2020), 265--277.
 
 
  \bibitem{sababheh}
 M. Sababheh and H. R. Moradi, {\it More accurate numerical radius inequalities (I)}, Linear Multilinear Algebra. https://doi.org/10.1080/03081087.2019.1651815
 
 \bibitem{sab_laa}
 M. Sababheh, {\it Convexity and matrix means}, Linear Algebra Appl., {\bf506} (2016), 588--608.

\bibitem{yamazaki}
 T. Yamazaki, {\it On upper and lower bounds of the numerical radius and an equality condition}, Studia Math., {\bf178} (2007), 83--89.






\end{thebibliography}
\end{document}